\documentclass[12pt]{article}

\usepackage{amssymb,amsmath,latexsym,graphics, supertabular}

\usepackage[enableskew]{youngtab}

\usepackage{amsbsy}

\usepackage{amscd}

\usepackage{verbatim}

\usepackage{amsfonts}

\usepackage{amsthm}

\usepackage{times}

\usepackage{mathrsfs}

\usepackage{verbatim}

\usepackage{graphicx,subfigure}

\usepackage[colorlinks]{hyperref}

\usepackage{xypic}

\usepackage{txfonts}

\usepackage[english]{babel}

\usepackage[T1]{fontenc}

\usepackage[utf8]{inputenc}

\usepackage{tikz}

\usetikzlibrary{backgrounds,fit,decorations.pathreplacing}
\usetikzlibrary{arrows,shapes,positioning}
\usetikzlibrary{calc,through,backgrounds,chains}

\usetikzlibrary{arrows,shapes,snakes,automata,backgrounds,petri}

	\setlength{\topmargin}{.2in}
	\setlength{\headsep}{.1in}
	

\newenvironment{pf*}[1]{\proof[#1]}{\endproof}
\newtheorem{Theorem}[equation]{Theorem}

\newtheorem{Lemma}[equation]{Lemma}

\theoremstyle{definition}

\newtheorem{Example}[equation]{Example}

\newtheorem{Conjecture}[equation]{Conjecture}

\theoremstyle{remark}

\newtheorem{Remark}[equation]{Remark}

\numberwithin{equation}{section}

\newcommand{\PP}{{\mathbb P}}
\newcommand{\C}{{\mathbb C}}
\newcommand{\Z}{{\mathbb Z}}

\newcommand{\R}{{\mathbb R}}

\newcommand{\mc}[1]{\mathcal{#1}}
\newcommand{\ms}[1]{\mathscr{#1}}

\newcommand{\beq}{\begin{equation}}
\newcommand{\eeq}{\end{equation}}

\newcommand{\mt}[1]{\text{#1}}
\newcommand{\ud}{\,\text{d}}

\def\vi{\varepsilon}

\DeclareMathOperator{\Des}{Des}

\def\ds#1{\displaystyle{#1}}

\begin{document}

\title{Ordered Bell numbers, Hermite polynomials, skew Young tableaux, and Borel orbits}
\author{Mahir Bilen Can\\ Michael Joyce}
\maketitle

\begin{abstract}
We give three interpretations of the number $b$ of orbits of the Borel subgroup of upper triangular matrices
on the variety $\ms{X}$ of complete quadrics.  First, we show that $b$ is equal to the number of standard Young tableaux on skew-diagrams. Then, we relate $b$ to certain values of a modified Hermite polynomial.  Third, we relate $b$ to a certain cell decomposition on $\ms{X}$ previously studied by De Concini, Springer, and Strickland.  Using these, we give asymptotic estimates for $b$ as the dimension of the quadrics increases.
\end{abstract}

{\em Keywords: Hermite polynomials, skew Young tableaux, ordered Bell numbers, variety of complete quadrics.}

\section{Introduction}

Let $G$ denote the special linear group $\mt{SL}_n$ over the complex numbers and let $B\subset G$
denote its Borel subgroup of upper triangular matrices.
An important theme in geometric representation theory is the
combinatorial study of the orbits of $B$ when $G$ acts on an algebraic variety $X$.

There are two main sources of examples for which the enumeration becomes as explicit as possible;

1. $X$ is a homogenous space of $G$ of the form $X= G/P$, where $P$ is a parabolic subgroup
containing $B$. Then $P=P_I$ is determined canonically by a finite set $I \subseteq \{1,\dots, n-1\}$, and the
total number $b(X)$ of $B$-orbits in $X$ is given by
\begin{align}\label{A:homogenousenumeration}
b(X)  = \frac{n!}{n_1 ! \cdots n_k !},
\end{align}
where $n_i-1$ ($i=1,\dots, k$) are the lengths of the maximal sequences of consecutive integers in $I$.
For example, if $I=\{2,5,6,7,10,11\}$, then $n_1=2$, $n_2=4$, and $n_3=3$.

2. $X$ is the projectivization of a linear algebraic monoid with unit group $G$.
Then $X$ is a projective $G\times G$-equivariant embedding of $G$ \cite{Rittatore98}, and its $B\times B$-orbits are parametrized
by the nonzero elements of a finite inverse semigroup, called the {\em Renner monoid} \cite{Renner89}.
When the corresponding monoid has a unique minimal $G\times G$-orbit, the order of its Renner monoid is computed in \cite{LLC06}.
In particular, when $X$ is the `equivariant wonderful embedding' $\ms{X}_e$ of $G$, the order of its Renner monoid
is given by
\begin{align}\label{A:wonderful}
1+ b(\ms{X}_e)=  1+\sum_{I\subseteq \{1,\dots, n-1\}} \frac{(n!)^2}{n_I},
\end{align}
where $n_I$ is the product $n_I = n_1 ! \cdots n_k !$ as appeared in (\ref{A:homogenousenumeration}) and
$b(\ms{X}_e)$ is the number of $B\times B$-orbits in $\ms{X}_e$.

In this paper we consider the enumeration in the variety of complete quadrics, which is a
wonderful embedding $\ms{X}:=\ms{X}_n$ of the space of smooth quadric hypersurfaces in $\PP^{n-1}$.
A smooth quadric hypersurface $\mc{Q}$ in $\PP^{n-1}$, the complex projective $n-1$ space, is the vanishing locus of a
quadratic polynomial of the form $x^\top A x$, where $A$ is a symmetric, invertible $n\times n$ matrix
and $x$ is a column vector of variables.
The correspondence $\mc{Q} \leftrightsquigarrow A$ is unique up to scalar multiples of $A$.
There is a transitive action of $\mt{SL}_n$ on smooth quadrics induced from the action on matrices:
\begin{align}\label{A:SLnaction}
g \cdot A = \theta(g) A g^{-1},
\end{align}
where $g \in \mt{SL}_n$ and $\theta$ is the involution $\theta(g) = (g^\top)^{-1}$.
Let $\mt{SO}_n \subset \mt{SL}_n$ denote the orthogonal subgroup consisting of matrices $g\in \mt{SL}_n$ such that
$(g^\top)^{-1}=g$.
The stabilizer of the class of scalar diagonal matrices is the normalizer $\widetilde{\mt{SO}}_n$ of
$\mt{SO}_n$ in $\mt{SL}_n$, hence, the space of smooth quadric hypersurfaces is identified with
$\mt{SL}_n / \widetilde{\mt{SO}}_n$.

Any pair $(G,\sigma)$, where $G$ is a semi-simple, simply connected complex algebraic group
and $\sigma: G\rightarrow G$ an involution has a (canonical) wonderful embedding.
Let $H$ denote the normalizer of $G^\sigma$. The {\em wonderful embedding} $X$ is the unique smooth projective
$G$-variety containing an open $G$-orbit isomorphic to $G/H$ whose boundary $X - (G/H)$ is a union of smooth
$G$-stable divisors with smooth transversal intersections. Boundary divisors are canonically indexed by the elements
of a certain subset $\varDelta$ of a root system associated to $(G,\sigma)$. Each $G$-orbit in $X$ corresponds to a subset
$I\subseteq \varDelta$.  The Zariski closure of the orbit is smooth and is equal to the transverse intersection of the
boundary divisors corresponding to the elements of $I$.

The  wonderful embedding of the pair $(\mt{SL}_n,\sigma)$ above is $\ms{X}$,
where $\sigma(g) = (g^{\top})^{-1}$. In this case, $\varDelta$ is the set of simple roots associated to $\mt{SL}_n$
relative to its maximal torus of diagonal matrices contained in the Borel subgroup $B\subseteq \mt{SL}_n$
of upper triangular matrices, and is canonically identified with the set $[n-1] = \{1, 2, \dots, n-1\}$.

We count the number of orbits of the Borel subgroup $B$ in $\ms{X}$ in three different ways.
Two of these methods are combinatorial in nature and related to each other, while the third method is more geometric.
We compare our findings with the number of $B\times B$-orbits in the equivariant wonderful embedding of $\mt{SL}_n$.
Let us explain our results in more detail.

We identify a partition $\lambda=(\lambda_1 \geq \dots \geq \lambda_k \geq 0)$ with its
Young diagram (left justified rows of boxes with row lengths $\lambda_i$, decreasing from top to bottom).
Given two partitions $\lambda$ and $\mu$, the {\em skew-diagram} $\lambda \times \mu$
is defined by placing $\mu$ below $\lambda$ in such a way that the rightmost box in the first row of $\mu$
is immediately below and to the left of the lowest box in the first column of $\lambda$.
For example, the skew-diagram $(3,2)\times (2,1,1)$ is depicted as in Figure \ref{fig:skew diagram}.
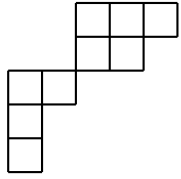
\begin{figure}
\begin{center}
\begin{tikzpicture}[scale=.45]
\begin{scope}
\draw[-, thick] (0,0) to (0,3);
\draw[-, thick] (-2,1) to (2,1);
\draw[-, thick] (0,3) to (3,3);
\draw[-, thick] (0,2) to (3,2);
\draw[-, thick] (2,1) to (2,3);
\draw[-, thick] (1,1) to (1,3);
\draw[-, thick] (3,2) to (3,3);
\draw[-, thick] (-2,1) to (-2,-2);
\draw[-, thick] (0,0) to (-2,0);
\draw[-, thick] (-1,1) to (-1,-2);
\draw[-, thick] (-1,-2) to (-2,-2);
\draw[-, thick] (-1,-1) to (-2,-1);
\end{scope}
\end{tikzpicture}
\end{center}
\label{fig:skew diagram}
\caption{The skew-diagram $(3,2) \times (2,1,1)$.}
\end{figure}
More generally, the skew-diagram $\lambda^1 \times \cdots \times \lambda^r$ of arbitrarily many partitions
$\lambda^1,\lambda^2,\dots,\lambda^r$ is defined, inductively, by the same procedure.

A {\em skew standard Young tableau} (skew SYT for short) is a filling of a skew-diagram with $n$ boxes, using each number
$1,\dots, n$ exactly once, increasing left to right in rows and increasing along columns from top to bottom.
For example,
\begin{center}
\begin{tikzpicture}[scale=.45]

\node at (-1.5,.5) {$1$};
\node at (-.5,.5) {$4$};
\node at (-1.5,-.5) {$3$};
\node at (-1.5,-1.5) {$8$};

\node at (.5,2.5) {$2$};
\node at (1.5,2.5) {$5$};
\node at (2.5,2.5) {$9$};
\node at (.5,1.5) {$6$};
\node at (1.5,1.5) {$7$};

\begin{scope}
\draw[-, thick] (0,0) to (0,3);
\draw[-, thick] (-2,1) to (2,1);
\draw[-, thick] (0,3) to (3,3);
\draw[-, thick] (0,2) to (3,2);
\draw[-, thick] (2,1) to (2,3);
\draw[-, thick] (1,1) to (1,3);
\draw[-, thick] (3,2) to (3,3);
\draw[-, thick] (-2,1) to (-2,-2);
\draw[-, thick] (0,0) to (-2,0);
\draw[-, thick] (-1,1) to (-1,-2);
\draw[-, thick] (-1,-2) to (-2,-2);
\draw[-, thick] (-1,-1) to (-2,-1);
\end{scope}
\end{tikzpicture}
\end{center}
is a skew SYT on $(3,2) \times (2,1,1)$.

Our first result is the following
\begin{Theorem} \label{T:Main1}
The number $b(\ms{X})$ of $B$-orbits in $\ms{X}$ is equal to the number of skew standard Young tableaux with $n$ boxes.
\end{Theorem}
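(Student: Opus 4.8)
The plan is to stratify $\ms{X}$ by its $\mt{SL}_n$-orbits and to count $B$-orbits stratum by stratum, using the fibre-bundle structure of the orbits of a wonderful compactification. As recalled above, the $\mt{SL}_n$-orbits of $\ms{X}$ are indexed by the $2^{n-1}$ subsets of $\varDelta = [n-1]$; concretely, the orbit attached to a subset is, $\mt{SL}_n$-equivariantly, a fibre bundle over a partial flag variety $\mt{SL}_n/P$ whose fibre is a product $\ms{X}_{m_1}^\circ \times \cdots \times \ms{X}_{m_k}^\circ$, where $\ms{X}_m^\circ = \mt{SL}_m/\widetilde{\mt{SO}}_m$ is the (open) space of smooth quadrics in $\PP^{m-1}$ and $(m_1,\dots,m_k)$ is the composition of $n$ giving the block sizes of the Levi factor $L$ of $P$ (so $\ms{X}_1^\circ$ is a point, the open orbit $\ms{X}_n^\circ$ corresponds to $(n)$, the closed orbit $\mt{SL}_n/B$ to $(1,\dots,1)$, and $|W^P| = \binom{n}{m_1,\dots,m_k}$, recovering \eqref{A:homogenousenumeration}). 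The correspondence subset $\leftrightarrow$ composition of $n$ being a bijection, this reduces the theorem to computing the number of $B$-orbits on each such bundle and to a combinatorial identity.

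For the $B$-orbit count on a bundle $\mt{SL}_n \times_P Z$ I would first isolate the following general fact: if $P$ is a standard parabolic with Levi $L$ and $Z$ is a $P$-variety on which the unipotent radical of $P$ acts trivially, then $b(\mt{SL}_n \times_P Z) = |W^P| \cdot \#\{B_L\text{-orbits on } Z\}$, where $B_L = B \cap L$. Indeed, the Bruhat cells $BwP/P$ ($w \in W^P$) stratify $\mt{SL}_n/P$; the $B$-orbits lying over the cell of $w$ are in bijection with the orbits of $\mt{Stab}_B(wP/P) = B \cap wPw^{-1}$ on $Z$; and for a minimal coset representative $w$ the image of $B \cap wPw^{-1}$ under the projection to $L$ (conjugated by $w^{-1}$) is exactly $B_L$, which is the translation into group terms of the defining property $w(\Phi_L^+) \subseteq \Phi^+$ of $w \in W^P$, as one checks on Lie algebras. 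Applying this with $Z = \ms{X}_{m_1}^\circ \times \cdots \times \ms{X}_{m_k}^\circ$, on which $B_L$ acts through its projections to the Borel subgroups of the factors $\mt{PGL}_{m_i}$, the orbit count factors:
\begin{align*}
b\big(\mt{SL}_n \times_P (\ms{X}_{m_1}^\circ \times \cdots \times \ms{X}_{m_k}^\circ)\big) = \binom{n}{m_1,\dots,m_k}\prod_{i=1}^k \beta_{m_i}, \qquad \beta_m := \#\{B\text{-orbits on }\ms{X}_m^\circ\}.
\end{align*}
The last geometric ingredient is the evaluation $\beta_m = \#\{\text{SYT with } m \text{ boxes}\}$: the space $\ms{X}_m^\circ$ is the set of nondegenerate symmetric $m \times m$ matrices up to congruence by $B$ and scaling, whose orbits are classically parametrized by the involutions of $S_m$ (equivalently by the $\mt{O}_m$-orbits on $\mt{SL}_m/B$), and by the RSK correspondence the involutions of $S_m$ are in bijection with the standard Young tableaux with $m$ boxes, so $\beta_m = \sum_{\lambda \vdash m} f^\lambda$.

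Since $\ms{X}$ is the disjoint union of its orbits and the orbit indexed by $(m_1,\dots,m_k)$ contributes $\binom{n}{m_1,\dots,m_k}\prod_i\beta_{m_i}$ $B$-orbits, this gives $b(\ms{X}) = \sum_{(m_1,\dots,m_k)\vDash n}\binom{n}{m_1,\dots,m_k}\prod_{i}\beta_{m_i}$, the sum over all compositions of $n$ (of all lengths), and it remains to identify this with the number of skew SYT on $n$ boxes. Here one uses that in a skew-diagram $\lambda^1 \times \cdots \times \lambda^k$ no box of $\lambda^i$ shares a row or a column with a box of $\lambda^j$ for $i \neq j$; hence a skew SYT on it is precisely a choice of an SYT on each $\lambda^i$ together with a shuffle of the entry sets, so the number of skew SYT of that shape is $\binom{n}{|\lambda^1|,\dots,|\lambda^k|} \prod_i f^{\lambda^i}$. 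Summing first over the $\lambda^i$ of fixed sizes $m_i$ (which turns $\prod_i f^{\lambda^i}$ into $\prod_i \beta_{m_i}$ by the previous paragraph) and then over all compositions of $n$ — noting that every skew-diagram with $n$ boxes comes from a unique sequence of nonempty partitions — produces exactly $\#\{\text{skew SYT with } n \text{ boxes}\}$, hence equals $b(\ms{X})$. The main obstacle I anticipate is making the first paragraph precise: writing down the fibre-bundle description of the boundary strata of $\ms{X}$ and their indexing by compositions with full rigour (via the De Concini--Procesi / De Concini--Springer theory), together with the auxiliary lemma on $B$-orbits of induced bundles; the combinatorial bookkeeping and the classical identity $\beta_m = \#\{\text{SYT with } m \text{ boxes}\}$ are comparatively routine.
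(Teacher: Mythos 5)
Your proposal is correct and follows essentially the same route as the paper: both reduce the count to $\sum_{(i_1,\dots,i_k)\vDash n}\binom{n}{i_1,\dots,i_k}I(i_1)\cdots I(i_k)$ by stratifying according to the (composition indexing the) flag type, observing that over each Bruhat cell/torus-fixed flag the stabilizer acts on the successive quotients through the full Borel so that the fibre contributes $\prod_i I(i_i)$ involution counts, and then converting via RSK and the multinomial formula $f^{\lambda^1\times\cdots\times\lambda^k}=\binom{n}{|\lambda^1|,\dots,|\lambda^k|}\prod_i f^{\lambda^i}$ into the skew SYT count. Your version merely makes explicit, as a general lemma on induced bundles $\mt{SL}_n\times_P Z$, what the paper asserts directly about the restriction of $B_{\mc{F}}$ to the successive quotients.
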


Orthogonal polynomials, which are defined as the eigenfunctions of a second order differential operator
of the form
$$
L = p(z) \frac{\ud^2}{ \ud z^2} + q(z) \frac{\ud}{\ud z},
$$
where $p(z)$ and $q(z)$ are (quadratic) polynomials, play an important role in mathematics and physics, \cite{AAR99}.
Among the special cases are the Hermite polynomials (with $p(z)= 1$, $q(z)= -2z$)
and Laguerre polynomials (with $p(z)  = z$, $q(z)= a+1-z$).
Laguerre polynomials arise in the enumeration of $B$-orbits in the most well-known
equivariant embedding of $\mt{SL}_n$, namely the monoid of $n \times n$ matrices \cite{CR08}.

The $B$-orbits in the quasi-affine space $\mt{SL}_n/\mt{SO}_n$ are in bijection with
involutions of the symmetric group $S_n$ \cite{RS90}, and there is a relation between the number of involutions and 
Hermite polynomials through their generating functions. We exploit this connection to relate Hermite polynomials to 
$B$-orbit enumeration in $\ms{X}$. To this end, define the {\em modified-Hermite polynomial}, $H_n(y)$, by
\begin{align}\label{A:hermite}
e^{y(x+x^2/2)} = \sum_{n\geq 0} \frac {H_n(y)}{n!} x^n.
\end{align}

\begin{Theorem}\label{T:second}
The number of $B$-orbits in the variety of complete quadrics $\ms{X}$ is equal to
\begin{align*}
b(\ms{X})= a_{n,1} H_n (1) + a_{n,2} H_n (2) + \cdots + a_{n,n} H_n(n),
\end{align*}
where
\begin{align*}
a_{n,r} = \sum_{i=0}^{n-r} (-1)^i {r+i \choose r}.
\end{align*}
\end{Theorem}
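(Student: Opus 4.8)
The plan is to combine Theorem~\ref{T:Main1} with a short generating-function computation. Write $b_n := b(\ms{X}_n)$, let $g_m$ denote the number of standard Young tableaux with $m$ boxes (summed over all shapes $\lambda \vdash m$), and recall two classical facts: by the Robinson--Schensted correspondence (Sch\"utzenberger), $g_m$ equals the number of involutions in $S_m$; and the involutions have exponential generating function $\sum_{m \ge 0} g_m x^m/m! = e^{x+x^2/2}$, which by~(\ref{A:hermite}) says $g_m = H_m(1)$.

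First I would unwind the skew-tableaux count of Theorem~\ref{T:Main1}. In the skew-diagram $\lambda^1 \times \cdots \times \lambda^r$ the component partitions are, by the very construction, placed in pairwise disjoint sets of rows \emph{and} pairwise disjoint sets of columns: the rightmost box of the first row of $\lambda^{i+1}$ sits strictly below and strictly to the left of every box of $\lambda^1, \dots, \lambda^i$. Hence the row- and column-monotonicity conditions never relate an entry of $\lambda^i$ to an entry of $\lambda^j$ with $i \ne j$, so a skew SYT with $n$ boxes is exactly the data of a composition $(n_1, \dots, n_r)$ of $n$ into positive parts, an ordered set partition $(S_1, \dots, S_r)$ of $\{1, \dots, n\}$ with $|S_i| = n_i$, and for each $i$ a standard filling of some shape $\lambda^i \vdash n_i$ by the elements of $S_i$. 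Counting these and invoking Theorem~\ref{T:Main1},
\begin{align*}
b_n = \sum_{r \ge 0} \ \sum_{\substack{n_1 + \cdots + n_r = n \\ n_i \ge 1}} \binom{n}{n_1, \dots, n_r}\, g_{n_1} \cdots g_{n_r}.
\end{align*}
In terms of $B(x) := \sum_{n \ge 0} b_n x^n/n!$ and $G(x) := \sum_{m \ge 1} g_m x^m/m! = e^{x+x^2/2} - 1$, this is the composition (``sequence'') formula $B(x) = \sum_{r \ge 0} G(x)^r = \dfrac{1}{1 - G(x)} = \dfrac{1}{\,2 - e^{x+x^2/2}\,}$.

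Next I would extract the coefficient of $x^n$. Since $G(x)$ has zero constant term, $x^r \mid G(x)^r$, so only $r \le n$ contributes to $[x^n]B(x)$; expanding binomially,
\begin{align*}
G(x)^r = \bigl(e^{x+x^2/2} - 1\bigr)^r &= \sum_{j=0}^r (-1)^{r-j} \binom{r}{j}\, e^{j(x+x^2/2)} \\
&= \sum_{j=0}^r (-1)^{r-j} \binom{r}{j} \sum_{n \ge 0} \frac{H_n(j)}{n!}\, x^n .
\end{align*}
Hence, for each $n$,
\begin{align*}
b_n = \sum_{r=0}^n \sum_{j=0}^r (-1)^{r-j} \binom{r}{j} H_n(j) &= \sum_{j=0}^n H_n(j) \sum_{r=j}^n (-1)^{r-j} \binom{r}{j} \\
&= \sum_{j=0}^n H_n(j) \sum_{i=0}^{n-j} (-1)^i \binom{j+i}{j} = \sum_{j=0}^n a_{n,j}\, H_n(j),
\end{align*}
after interchanging the two finite sums and substituting $i = r-j$. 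Finally, for $n \ge 1$ the $j = 0$ term is $a_{n,0} H_n(0) = a_{n,0}\, n!\,[x^n]\, e^0 = 0$, so the sum may be started at $j = 1$, which is the asserted formula.

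The only genuinely delicate point is the bookkeeping around the truncation: $B(x) = \sum_{r \ge 0} G(x)^r$ is an identity of formal power series with infinitely many summands, and upon substituting the binomial expansion of $G(x)^r$ and swapping the order of summation one must be sure that everything surviving in a fixed degree $x^n$ is a finite sum; this is exactly what $x^r \mid G(x)^r$ guarantees, but it should be spelled out. The combinatorial reduction in the first step is routine once the disjoint-rows-and-columns feature of the skew-diagram construction is noticed, and the remaining manipulations are elementary.
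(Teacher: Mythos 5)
Your proposal is correct and follows essentially the same route as the paper: both reduce to the composition formula $b(\ms{X})=\sum\binom{n}{i_1,\dots,i_k}I(i_1)\cdots I(i_k)$ (which you recover from Theorem~\ref{T:Main1} via the disjoint-rows-and-columns structure of skew diagrams, while the paper takes it directly from equation~(\ref{A:compositions}) in the proof of that theorem), then extract the coefficient of $x^n/n!$ from $\sum_k (e^{x+x^2/2}-1)^k$ via the binomial theorem and reindex. Your extra care with the $r=0$ and $j=0$ boundary terms and the formal-power-series truncation is a minor tidying of the same argument.
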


Let $I \subset [n-1]$ and let $P =P_I := B W_I B$ denote the parabolic subgroup corresponding to $W_I$,
the subgroup of $W := S_n$ generated by the simple transpositions $s_i = (i, i+1)$ for $i \in I$.
Let $W^I$ denote the minimal length coset representatives of $W / W_I$.
The Bruhat-Chevalley decomposition of $\mt{SL}_n$ into the disjoint union of $B w P$, $w \in W^I$,
gives a cell decomposition for the partial flag variety $X=\mt{SL}_n/P$.
The value at 1 of the Poincar{\' e} polynomial of $X$ is the number of $B$-orbits in $X$.

In general, a cell decomposition need not coincide with the decomposition of the variety into $B$-obits.
On the other hand, it is shown in \cite{BL87} that the cells of a known cell decomposition of a wonderful embedding
are unions of $B$-orbits.
The structure of these cell decompositions for a general wonderful embedding is determined by
De Concini and Springer in \cite{DS85} and the case of complete quadrics is given by Strickland in \cite{Strickland86}.

Building on these geometric observations we obtain our third formula for $b(\ms{X})$, expressible
in terms of a classical notion of combinatorics, namely the descent set of a permutation:
if $w=w_1w_2\cdots w_n \in S_n$ is a permutation given in one-line notation, then the {\em descent set} of
$w$ is defined as $\Des(w)= \{i :\ w_i > w_{i+1}\}$.

\begin{Theorem}\label{T:third}
The number of $B$-orbits of complete quadrics $\ms{X}$ is equal to
\begin{align*}
b(\ms{X}) = \sum_{J} \sum_{w \in W^J} 2^{a_J(w)+b_J(w)},
\end{align*}
where the first summation is over those subsets $J$ of $[n-1]= \{1,2,\dots, n-1\}$ which do not contain two consecutive numbers, $a_J(w)$ is the number of descents $i$ of $w$ such that neither $i$ nor $i-1$ is in $J$, and $b_J(w)$ is the number of descents $i$ of $w$ such that $w(i+1) < w(i-1) < w(i)$, $i-1 \in J$ and $i \notin J$.
\end{Theorem}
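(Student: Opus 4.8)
The plan is to use the known cell decomposition of $\ms{X}$ due to De Concini--Springer \cite{DS85}, as made explicit for complete quadrics by Strickland \cite{Strickland86}, and to count the cells by organizing them according to which $G$-orbit closure of $\ms{X}$ they refine. Recall that the $G$-orbits of $\ms{X}$ are indexed by subsets $I \subseteq \varDelta = [n-1]$; the closure of the orbit $\mc{O}_I$ is itself a wonderful-type variety built from a parabolic induction, fibered over a partial flag variety $G/P_{I^c}$ with fiber a product of smaller varieties of complete quadrics (one for each maximal block of consecutive integers in $I$, plus a factor $\mt{SL}_m/\widetilde{\mt{SO}}_m$ for the ``open part''). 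By \cite{BL87} the cells are unions of $B$-orbits, and the cell decomposition of $\ms{X}$ restricts compatibly on each orbit closure, so $b(\ms{X})$ equals the total number of cells, which we compute as a sum over $G$-orbits of (number of cells in the open stratum of $\mc{O}_I$).

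The key steps, in order, are as follows. First I would recall Strickland's parametrization: a cell of $\ms{X}$ is determined by a ``symbol'' consisting of a nested/blocked sequence of subsets together with a coset representative in some $W^J$, where the combinatorics bookkeeps the partial flag direction and the quadric directions separately. Second, I would show that the cells meeting the open $G$-orbit $G/\widetilde{\mt{SO}}_n$ are exactly the $B$-orbits there, and by \cite{RS90} these are indexed by involutions of $S_n$; more precisely, reorganizing the De Concini--Springer description, the number of such open cells is controlled by the descent data of a minimal coset representative, giving rise to the exponents $a_J(w)$ and $b_J(w)$. Here the condition ``$J$ contains no two consecutive integers'' appears because $J$ records the ``type'' of the isotropic-flag part (a partial polarization), and the intersection-form nondegeneracy forces the blocks to have size at most $2$ — equivalently, $J$ is an ``independent set'' in the path graph on $[n-1]$. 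Third, I would carefully identify the two types of descents: a descent $i$ with $\{i-1,i\}\cap J=\emptyset$ contributes a free binary choice (the two ways a $2\times 2$ nondegenerate symmetric block can degenerate in the wonderful compactification direction), contributing the factor $2^{a_J(w)}$; and a descent $i$ with $i-1\in J$, $i\notin J$, and the extra inequality $w(i+1)<w(i-1)<w(i)$ contributes another binary choice coming from the interaction between the flag part and an adjacent quadric block, giving $2^{b_J(w)}$. Summing $2^{a_J(w)+b_J(w)}$ over $w\in W^J$ and then over all admissible $J$ counts all cells, hence all $B$-orbits.

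The main obstacle I expect is Step 3: translating Strickland's geometric/combinatorial description of the cells into precisely the descent-theoretic conditions stated in the theorem, and in particular pinning down the exact inequality $w(i+1)<w(i-1)<w(i)$ that distinguishes when an adjacency between a $J$-block and a non-$J$-descent produces an extra factor of $2$. This requires a careful analysis of how a minimal coset representative $w\in W^J$ interacts with the recursive fibration structure — i.e., tracking how the Bruhat cell $BwP_J$ meets the boundary divisors of $\ms{X}$ — and verifying that no over- or under-counting occurs when several such adjacencies are present. A secondary technical point is checking the base case and the inductive compatibility of the cell decomposition with the orbit-closure stratification, which I would handle by invoking the functoriality of the De Concini--Springer construction under parabolic induction together with the explicit description in \cite{Strickland86}; once that framework is in place, the enumeration reduces to the bookkeeping described above, and cross-checking against Theorem \ref{T:Main1} and Theorem \ref{T:second} for small $n$ provides a useful sanity check.
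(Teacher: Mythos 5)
Your overall strategy is the same as the paper's: use the Bia{\l}ynicki-Birula/De Concini--Springer cell decomposition, the Brion--Luna fact that a cell meets each $G$-orbit in at most one $B$-orbit, and Strickland's characterization of which $G$-orbits contain torus fixed points (the ``special'' $J$, i.e., those with no two consecutive elements). But there are two genuine problems. First, a framing error: $b(\ms{X})$ is \emph{not} the total number of cells. The cells are indexed by torus fixed points, so there are only $\sum_{J\ \mathrm{special}}|W^J|$ of them (for $n=3$ that is $12$, whereas $b(\ms{X}_3)=22$). The correct count is $b(\ms{X})=\sum_{\mathrm{cells}}\#\{G\text{-orbits meeting that cell}\}$, and the entire content of the theorem is that for the plus-cell $C_p^+$ attached to $w\in W^J$ this number equals $2^{a_J(w)+b_J(w)}$. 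Your parenthetical about summing over $G$-orbits gestures at the right double count, but the proposal never establishes the structural fact that produces the power of $2$: by De Concini--Springer (Lemma 4.1) combined with Strickland (Proposition 2.5), the set of $K$ with $\ms{O}^K\cap C_p^+\neq\emptyset$ is precisely the Boolean interval $J\subset K\subset J\cup\{\alpha_i\in J^c:\ -w(\alpha_i+w_J(\alpha_i))>0\}$, so the count is $2^{r_J(w)}$ with $r_J(w)$ the size of that second set.

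Second, and more seriously, the step you yourself flag as ``the main obstacle'' --- deriving the precise descent conditions defining $a_J(w)$ and $b_J(w)$ --- is exactly the substance of the proof, and it is left undone. In the paper this is a short explicit root computation: writing $w_J=\prod_{\alpha_i\in J}\sigma_i$ and splitting into four cases according to $\{\alpha_{i-1},\alpha_{i+1}\}\cap J$, one expresses $-w(\alpha_i+w_J(\alpha_i))$ explicitly in the basis $\vi_{w(j)}$ and reads off that it is positive if and only if either $\alpha_{i-1}\notin J$ and $w(i+1)<w(i)$, or $\alpha_{i-1}\in J$ and $w(i+1)<w(i-1)<w(i)$; this is where the inequality $w(i+1)<w(i-1)<w(i)$ comes from, not from a heuristic about how a $2\times 2$ nondegenerate block can degenerate. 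Without this computation (or an equivalent one) the proposal establishes only the expected shape of the formula, not the formula itself, so as written it is incomplete.
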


\begin{Remark}
Theorem \ref{T:third} suggests studying the following family of $q$-polynomials:
\begin{align*}
B_{n,J}(q) = \sum_{w \in W^J} q^{a_J(w)+b_J(w)},
\end{align*}
where $J$ is {\em any} subset of $[n-1]$, $a_J(w)$ and $b_J(w)$ are as in the theorem.
The following observations about $B_{n,J}(q)$ are easily verified.
\begin{enumerate}
\item When $J=\emptyset$, $a_J(w)$ is equal to $\Des(w)$ and $b_J(w)=0$.
Therefore, $B_{n,\emptyset}(q)$ is the Eulerian polynomial $\sum_{w\in W} q^{|\Des(w)|}$.
\item On the other extreme, if $J=[n-1]$, then $B_{n,J}(q)=1$.
\item For $J\subseteq [n-1]$, let $m_1,\dots,m_s$ be the sizes of the maximal sequences of consecutive elements in $J$. Then
$$
B_{n,J}(1) = \frac{n!}{(m_1+1) ! \cdots (m_s+1) !}.
$$
\end{enumerate}
At the end of the manuscript, in Table \ref{Table}, we have the list of all $B_{n,J}$ for $n=5$.
We conjecture that, for each $J\subseteq [n-1]$, the polynomial $B_{n,J}(q)$ is unimodal.
\end{Remark}

It is natural to compare $b(\ms{X})$ with $b(\ms{X}_e)$, the number of $B\times B$-orbits in the equivariant wonderful embedding
of $\mt{SL}_n$.
To this end, let $b_n$ denote the $n$-th {\em ordered Bell number}, the number of ordered set partitions of
an $n$-element set. For example, $b_2 = 3$, since the ordered partitions of $\{1,2\}$ are $(\{ 1\}, \{ 2\})$, $(\{2\}, \{1\})$, and $(\{1,2\}).$
We know from \cite[Section 5.2]{Wilf06} that, for large $n$,
\begin{align}\label{A:asymptoticBell}
b_n \thicksim \frac{n!}{2 (\log 2)^{n+1}} \thicksim \frac{n! (1.443)^{n-1}}{0.961}.
\end{align}

The next lemma, whose proof is given in Section \ref{S:upper and lower}, is a hint
of the rich combinatorial structure on the set of $B\times B$-orbits in $\ms{X}_e$.
\begin{Lemma}\label{L:thelemma}
The number $b(\ms{X}_e)$ of $B\times B$-orbits in the equivariant wonderful embedding of $\mt{SL}_n$ is
equal to $n! b_n$.
\end{Lemma}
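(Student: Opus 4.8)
The plan is to start from the formula (\ref{A:wonderful}), which already expresses $1 + b(\ms{X}_e)$ as $1 + \sum_{I \subseteq [n-1]} (n!)^2 / n_I$, so that
\begin{align*}
b(\ms{X}_e) = (n!)^2 \sum_{I \subseteq [n-1]} \frac{1}{n_I},
\end{align*}
where $n_I = n_1! \cdots n_k!$ with $n_i - 1$ the lengths of the maximal runs of consecutive integers in $I$. Thus the entire content of the lemma reduces to the purely combinatorial identity
\begin{align*}
\sum_{I \subseteq [n-1]} \frac{1}{n_1! \cdots n_k!} = \frac{b_n}{n!},
\end{align*}
i.e. $\sum_I n!/n_I = b_n$. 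So the first step is to record this reduction, citing \cite{LLC06} for (\ref{A:wonderful}).

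Next I would prove the combinatorial identity by setting up a bijection-with-weights between subsets $I \subseteq [n-1]$ and compositions of $n$. A subset $I$ of $[n-1]$ corresponds to a composition $(c_1, \dots, c_m)$ of $n$ in the standard way (the "bars and gaps" correspondence: $i \in I$ iff there is no bar after position $i$). Under this correspondence, a maximal run of $r$ consecutive elements of $I$ glues together $r+1$ adjacent parts; so if I instead encode $I$ by the coarser data of which parts get glued, the right bookkeeping is: $n!/n_I$ counts the ways to distribute $\{1, \dots, n\}$ into an \emph{ordered} list of blocks whose sizes are the run-lengths-plus-one, divided by nothing further — more precisely $n!/(n_1! \cdots n_k!)$ is a multinomial coefficient counting ordered set partitions into blocks of those prescribed sizes. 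Summing the multinomial coefficient $\binom{n}{n_1, \dots, n_k}$ over all $I$ should, after regrouping by the underlying ordered set partition, reproduce exactly the count of \emph{all} ordered set partitions of $[n]$, which is $b_n$ by definition. The cleanest way to see the regrouping is generating-functionally: $\sum_{k \ge 1} \sum_{n_1 + \cdots + n_k = n,\, n_i \ge 1} \binom{n}{n_1, \dots, n_k}$ over the relevant index set equals $b_n$ because $b_n$ has exponential generating function $1/(2 - e^x) = \sum_{k \ge 0}(e^x - 1)^k$, and $(e^x-1)^k$ is the EGF for ordered set partitions into $k$ nonempty blocks; one then checks that summing $1/n_I$ over subsets $I$ of $[n-1]$ produces precisely the coefficient extraction $n! [x^n] \sum_k (e^x-1)^k$ by matching runs of $I$ with blocks.

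The main obstacle — really the only subtlety — is getting the correspondence between "subsets $I \subseteq [n-1]$ with their run-structure" and "ordered set partitions of $[n]$" exactly right, including the edge cases ($I = \emptyset$ giving the trivial composition, consecutive runs, the off-by-one between run length $n_i - 1$ and block size $n_i$). I would therefore present it as follows: show first that the map sending $I$ to the composition of $n$ whose parts are $n_1, \dots, n_k$ (run-lengths plus one) together with the singleton parts for the positions outside all runs is a bijection onto all compositions of $n$, and that $n!/n_I$ is exactly the number of ordered set partitions of $[n]$ whose sequence of block sizes is that composition (the singleton blocks contribute factor $1! = 1$, which is why they are invisible in $n_I$). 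Summing over all compositions then sums over all ordered set partitions exactly once each, yielding $b_n$; multiplying by $n!$ and invoking (\ref{A:wonderful}) finishes the proof. An alternative, fully generating-function proof (multiply the EGF identity $\sum_{n} b_n x^n/n! = 1/(2-e^x)$ against the run-length bookkeeping) can be given as a remark, but the bijective argument is the one I would write out in Section \ref{S:upper and lower}.
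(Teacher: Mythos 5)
Your proposal is correct and follows essentially the same route as the paper: both reduce, via the formula $b(\ms{X}_e)=\sum_{I\subseteq[n-1]}(n!)^2/n_I$ and the bijection $\gamma\mapsto I_\gamma$ between compositions of $n$ and subsets of $[n-1]$ (under which $n_{I_\gamma}=\gamma_1!\cdots\gamma_k!$), to the identity $\sum_{\gamma\in Comp(n)}\binom{n}{\gamma_1,\dots,\gamma_k}=b_n$. The only cosmetic difference is that you prove this last identity primarily by the direct bijective count (multinomial coefficients enumerate ordered set partitions with prescribed block sizes), whereas the paper extracts it from the generating function $\sum_k(e^x-1)^k=1/(2-e^x)$ --- an argument you also include as your fallback.
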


Let $F_n$ denote the $n$-th Fibonacci number. For large $n$,
\begin{align}\label{A:asymptoticFibonacci}
F_n \thicksim \frac{1}{\sqrt{5}} \left( \frac{1+ \sqrt{5}}{2} \right)^n \thicksim \frac{(1.618)^{n-1}}{1.382}.
\end{align}

\begin{Theorem}\label{T:final}
For all sufficiently large $n$, the following inequalities hold:
$$
F_n n! < b(\ms{X}) < 2^{n-1} n! < b(\ms{X}_e).
$$
\end{Theorem}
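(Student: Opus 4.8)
The asserted chain breaks into three inequalities, which I would handle separately; only the two involving $b(\ms{X})$ need real work, and both rest on an exponential generating function for $b(\ms{X}_n)$ extracted from Theorem~\ref{T:Main1}. The key observation is that in a skew-diagram $\lambda^1\times\cdots\times\lambda^r$ the successive straight shapes occupy pairwise disjoint sets of rows and of columns, meeting only at corners; hence they are exactly the edge-connected components of the diagram (read top to bottom), and they are pairwise incomparable as subposets of $\Z^2$. A skew SYT on $\lambda^1\times\cdots\times\lambda^r$ with $n$ boxes is therefore a shuffle of SYT of the pieces, and there are $\binom{n}{n_1,\dots,n_r}\prod_i f^{\lambda^i}$ of them when $|\lambda^i|=n_i$, where $f^\lambda$ is the number of SYT of shape $\lambda$. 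Summing over all diagrams and using $\sum_{\lambda\vdash m}f^\lambda=I_m$, the number of involutions of $S_m$, together with the classical identity $\sum_{m\ge0}I_m x^m/m!=e^{x+x^2/2}$, Theorem~\ref{T:Main1} gives
\begin{align*}
\sum_{n\ge0}b(\ms{X}_n)\,\frac{x^n}{n!}=\sum_{r\ge0}\bigl(e^{x+x^2/2}-1\bigr)^r=\frac{1}{2-e^{x+x^2/2}},
\end{align*}
equivalently $b(\ms{X}_n)/n!=\sum\prod_i\iota_{n_i}$ with $\iota_m:=I_m/m!$, the sum running over all compositions $(n_1,\dots,n_r)$ of $n$.

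For the upper bound $b(\ms{X})<2^{n-1}n!$ I would evaluate this series at $x=\tfrac12$. Since $\sum_{m\ge1}\iota_m 2^{-m}=e^{1/2+1/8}-1=e^{5/8}-1<1$ — which is precisely the statement $\log 2>\tfrac58$ — the series $\sum_{n\ge0}\bigl(b(\ms{X}_n)/n!\bigr)2^{-n}=\bigl(1-\sum_{m\ge1}\iota_m 2^{-m}\bigr)^{-1}$ converges; hence its general term tends to $0$, so $\bigl(b(\ms{X}_n)/n!\bigr)2^{-n}<\tfrac12$ for all large $n$, which is $b(\ms{X}_n)<2^{n-1}n!$. (Alternatively one can note that $1/(2-e^{x+x^2/2})$ is analytic on $|x|<x_0:=-1+\sqrt{1+2\log2}$, that $x_0>\tfrac12$, and invoke singularity analysis; the series evaluation is cleaner.)

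For the lower bound $F_n n!<b(\ms{X})$ I would retain only those skew-diagrams $\lambda^1\times\cdots\times\lambda^r$ all of whose pieces lie in $\{(1),(2),(1,1)\}$. By the component description these are pairwise distinct, and each has every $f^{\lambda^i}=1$; a composition of $n$ into $1$'s and $2$'s with $j$ parts equal to $2$ underlies $2^j$ of them, each carrying $\binom{n}{n_1,\dots,n_r}=n!/2^j$ tableaux, so this sub-family alone supplies $\sum_j\binom{n-j}{j}2^j\cdot(n!/2^j)=n!\sum_j\binom{n-j}{j}=n!\,F_{n+1}$ skew SYT. Hence $b(\ms{X})\ge n!\,F_{n+1}>n!\,F_n$ for $n\ge2$. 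Finally, Lemma~\ref{L:thelemma} gives $b(\ms{X}_e)=n!\,b_n$, while $b_n>2^{n-1}$ for $n\ge2$ (the $2^{n-1}$ compositions of $n$ inject into, but do not exhaust, the ordered set partitions of $[n]$), so $2^{n-1}n!<b(\ms{X}_e)$; chaining the three estimates proves the theorem for all large~$n$.

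I expect the generating-function step to be the substantive one, the crux being the identification of the pieces of $\lambda^1\times\cdots\times\lambda^r$ with its edge-connected components — this is also what guarantees that the sub-family used in the lower bound is enumerated without repetition. After that the only non-bookkeeping ingredient is the numeric inequality $e^{5/8}<2$; note that it is exactly what forces the growth rate $1/x_0\approx1.84$ of $b(\ms{X})/n!$ to lie strictly between the Fibonacci rate $(1+\sqrt5)/2\approx1.62$ and $2$, which explains why the two-sided bound takes the form stated in Theorem~\ref{T:final}.
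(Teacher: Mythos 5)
Your argument is correct, and it reaches the theorem by a genuinely different route on all three inequalities. For the upper bound the paper stays with the composition formula $b(\ms{X}) = n!\sum_{\gamma\in Comp(n)}\prod_i \psi(\gamma_i)$, $\psi(m)=I(m)/m!\le 1$, and simply bounds each of the $2^{n-1}$ terms by $1$; this needs no analysis and works for every $n\ge 3$, whereas your evaluation of $1/(2-e^{x+x^2/2})$ at $x=\tfrac12$ is heavier and only yields ``sufficiently large $n$'' --- but it extracts the true exponential growth rate $1/x_0$ with $x_0=-1+\sqrt{1+2\log 2}$, which the paper never does, and which explains why the bounds $F_n$ and $2^{n-1}$ are the natural ones. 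For the lower bound the paper argues geometrically: by Strickland's lemma (Lemma \ref{L:Strickland}) the special subsets $K\subset\varDelta$ are Fibonacci-many, and by a result of Brion each corresponding $\mt{SL}_n$-orbit $\ms{O}^K$ contains exactly $n!$ $B$-orbits; your restriction to skew diagrams with pieces in $\{(1),(2),(1,1)\}$ is a purely combinatorial surrogate that needs nothing beyond Theorem \ref{T:Main1}, and your bookkeeping ($2^j$ diagrams per composition with $j$ twos, $n!/2^j$ tableaux each, $\sum_j\binom{n-j}{j}=F_{n+1}$) is right, giving the marginally stronger bound $n!\,F_{n+1}$; the agreement is no accident, since a composition of $n$ into parts $1,2$ records the flag type, i.e.\ a special subset. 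For the last inequality the paper compares the asymptotics $b_n\sim n!/(2(\log 2)^{n+1})$ from Wilf with $2^{n-1}$, while your injection of compositions into ordered set partitions is more elementary and valid for all $n\ge 2$. The only points that needed care --- that distinct ordered sequences of partitions give distinct skew diagrams (your edge-connected-component observation), the positivity needed to justify the termwise evaluation at $x=\tfrac12$, and the numeric fact $5/8<\log 2$ --- all check out.
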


\noindent \textbf{Acknowledgements.}
We thank Armin Straub for the communication on the details of Lemma \ref{L:thelemma},
and thank Tewodros Amdeberhan for his suggestions which improved the quality of the paper.
We thank Zhenheng Li, Victor Moll, and Michelle Wachs for helpful conversations.
The first author is partially supported by the Louisiana Board of Regents enhancement grant.

We thank the anonymous referees for providing many thoughtful suggestions to improve the quality of our paper.

\section{Notation and Preliminaries}\label{S:notation}

\subsection{Notation and Conventions}
All varieties are defined over $\C$ and all algebraic groups are complex algebraic groups.
Throughout, $n$ is a fixed integer, and $\ms{X} := \ms{X}_n$ denotes the $\mt{SL}_n$-variety of $(n-2)$-dimensional
complete quadrics, which is reviewed in Section \ref{S:complete quadrics}.
The integer interval $[m]$ denotes $\{1, 2, \dots, m\}$, the complement of $I$ in $[m]$ is denoted by $I^\text{c}$.
If $I$ and $K$ are sets, then $I-K$ denotes the set complement $\{a\in I:\ a\notin K\}$.
The transpose of a matrix $A$ is denoted $A^\top$.

The symmetric group $S_n$ of permutations of $[n]$ is denoted by $W$, and for $w \in W$,
$\ell(w)$ denotes the {\em length} of $w$, $\ell(w)= |\{(i,j):\ 1\leq i <j \leq n,\ w(i) > w(j)\}|$.

Let $B$ be the Borel subgroup of upper triangular matrices in $G=\mt{SL}_n$. Let $T\subset B$
denote the maximal torus of diagonal matrices.
Let $\vi_1,\dots, \vi_n$ denote the standard basis vectors for the Euclidean space $\R^n$ and
let $\alpha_i = \vi_i - \vi_{i+1}$ for $i=1,\dots,n-1$.
The standard root theory notation we use is as follows:
\begin{eqnarray*}
 \varDelta &=& \{ \alpha_i :\  1\leq i\leq n-1\}\ \text{(the set of simple roots relative to}\ (\mt{SL}_n,T)),\\
 \varPhi^+ &=& \{ \vi_i- \vi_j:\  1\leq i<j \leq n \}\ \text{(the set of positive roots associated with}\ \varDelta),\\
 \varPhi &=& \varPhi^+ \cup -\varPhi^+ \ \text{(the set of all roots associated with}\ \varDelta).
\end{eqnarray*}

The symmetric group $W$ acts on $\varPhi$ by permuting the indices
of standard basis vectors: $w\cdot \vi_i = \vi_{w^{-1}(i)}$.
 For $J\subseteq \varDelta$, let $W_J$ denote the
{\em parabolic subgroup} generated by the simple transpositions $\sigma_i=(i,i+1)$ corresponding to the roots $\alpha_i$ in $J$.
Let $W^J$ denote the {\em minimal length coset representatives} of $W/W_J$.
Since $W_J$ is a product of symmetric subgroups of $W$ generated by the adjacent simple roots contained in $J$,
the number of elements of $W^J$ is $n!/ n_1 !\cdots n_k !$, where $n_i-1$, $i=1,\dots, k$ is the length of a maximal
sequence of consecutive integers from $J$.

An involution $w\in W$ is an element of order $\leq 2$. We know from \cite{RS90} that involutions are in one-to-one correspondence with
$B$-orbits in the space of invertible symmetric $n\times n$ matrices. The number of involutions in $W$ is denoted by $I(n)$.
Then, as in [Section 3.8, \cite{Wilf06}],
\begin{align}\label{A:involution series}
Q(x) := \sum_{n\geq 0} \frac{I(n)}{n!} x^n = e^{x+x^2/2}.
\end{align}

We preserve our notation on partitions from Introduction. In addition, if $\lambda^1,\dots, \lambda^r$ are
partitions, then $|\lambda^1 \times \cdots \times  \lambda^r|$ denotes the sum of entries of $\lambda^i$'s for $i=1,\dots, r$.
For a partition $\lambda$, let $f^\lambda$ denote the number of $SYT$ of shape $\lambda$.
It follows from the well known RSK algorithm (see \cite{Sagan}, for example) that the number of involutions
is equal to the total number of $SYT$:
\begin{equation}\label{A:I=sumflambda}
I (n) = \sum_{|\lambda|= n} f^\lambda.
\end{equation}
The definition of a $SYT$ is extended to skew-shapes in the obvious way.
The number of $SYT$ of shape $\lambda \times \mu$ is equal to $f^{\lambda \times \mu}= {n+m \choose n} f^\lambda f^\mu$.
More generally, for given partitions $\lambda^1,\lambda^2,\dots, \lambda^k$
such that $ |\lambda^1 \times \cdots \times \lambda^k | =n$, we have
\begin{align}\label{A:fcross}
f^{\lambda^1 \times \dots \times \lambda^k} = \frac{n!}{ |\lambda^1| ! |\lambda^2| ! \cdots |\lambda^k|!} \prod_{i=1}^k f^{\lambda^i}
= {n \choose |\lambda^1|, |\lambda^2|,\dots, |\lambda^k|} \prod_{i=1}^k f^{\lambda^i}.
\end{align}

A {\em composition} of $n$ is an ordered sequence $\gamma = (\gamma_1,\dots, \gamma_k)$ of positive integers such that
$\sum \gamma_i  = n$. We denote by $Comp(n,k)$ the set of all compositions of $n$ with $k$ nonzero parts, and
denote by $Comp(n)$ the set of all compositions of $n$.

There exists a bijection between the set of all compositions of $n$ and the set of all subsets
of $[n-1]$. For $\gamma = (\gamma_1,\dots, \gamma_k)$ a composition of $n$, let $I_\gamma$
denote the complement of the set $I=\{ \gamma_1,\gamma_1+\gamma_2,\dots, \gamma_1+\cdots+\gamma_{k-1}\}$
in $[n-1]$. Then
\begin{align}\label{A:bijectionfromcompositions}
\gamma \mapsto I_\gamma
\end{align}
is the desired bijection.

\subsection{Partial Flag Varieties}\label{S:partialflagvariety}

We briefly review the relevant terminology from the theory of homogenous spaces, referring the reader to \cite{Humphreys}
for more on details.

Let $G$ denote $\mt{SL}_n$, $B\subset G$ the subgroup of upper triangular matrices and let $T\subset B$ be the subgroup
diagonal matrices.
A closed subgroup $P$ of $G$ is called a {\em standard parabolic subgroup}, if $B\subseteq P$.
More generally, a subgroup $P'$ is called {\em parabolic}, if there exists $g\in G$ such that $P'=g Pg^{-1}$
for some standard parabolic subgroup $P\subset G$.

There exists a one-to-one correspondence between the standard parabolic subgroups of $G$ and the subsets
of $[n-1]$ given by:
$$
I\rightsquigarrow P_I:= B W_I B.
$$
With this notation, the quotient space $G/P_I$ is called the {\em partial flag variety} of type $I$.
We need the following interpretation of $G/P_I$.

Let $I^c=\{j_1,\dots, j_{s}\}$ be the complement of $I$ in $[n-1]$.
Then $G/P_I$ is identified with the set of all nested sequences of vector spaces (flags) of the form
$$
\mc{F}:\ 0 \subset V_{j_1} \subset V_{j_2} \subset \cdots \subset V_{j_s} \subset \C^n,
$$
where $\dim V_{j_k} = j_k$, $k=1,\dots, s$.

Finally, let us mention the fact that $G/P_I$ has a canonical decomposition into $B$-orbits where the orbits are indexed by
the elements of $W^I$, the minimal length coset representatives.
Furthermore, $T$ acts on $G/P_I$ and each $B$-orbit contains a unique $T$-fixed flag.

\subsection{Wonderful Embeddings}\label{S:wonderful embeddings}
		
We briefly review the theory of wonderful embeddings, referring the reader to \cite{DP83} for more details.

A symmetric space is a quotient of the form $G/H$, where $G$ is an algebraic group, $H$ is the normalizer of
the fixed subgroup of an automorphism $\sigma: G \rightarrow G$ of order 2.
When $G$ is a semi-simple simply connected algebraic group (over an algebraically closed field), there exists a unique minimal
smooth projective $G$-variety $X$, called the {\em wonderful embedding} of $G/H$, such that
\begin{enumerate}
\item $X$ contains an open $G$-orbit $X_0$ isomorphic to $G/H$
\item $X - X_0$ is the union of finitely many $G$-stable smooth codimension one subvarieties $X_i$ for $i = 1, 2, \dots, r$
\item for any $I \subset [r]$, the intersection $X^I := \bigcap_{i \notin I} X_i$ is smooth and transverse
\item every irreducible $G$-stable subvariety has the form $X^I$ for some $I \subset [r]$
\item for $I\subset [r]$, let $\ms{O}^I$ denote the corresponding $G$-orbit whose Zariski closure is $X^I$.
There is a fundamental decomposition
\begin{equation}\label{E:orbit union}
X^I = \bigsqcup_{K \subset I} \ms{O}^K.
\end{equation}
Consequently, $G$-orbit closures form a Boolean lattice and there exists a unique closed orbit $Z$,
corresponding to $I = \emptyset$.

\end{enumerate}

The recursive structure of wonderful embeddings is apparent from the following observation:

For each $I\subset [r]$, there exists a parabolic subgroup $P_I$ and $G$-equivariant fibrations $\pi: \ms{O}^I \rightarrow G/P_I$
and $\overline{\pi}_I : X^I \rightarrow G/P_I$ such that the following diagram commutes:

\begin{figure}[htp]
\centering
\begin{tikzpicture}[scale=.45]
\begin{scope}
\node at (-4,0) (a) {$\ms{O}^I$};
\node at (4,0) (b) {$X^I$};
\node at (0,-4.5) (c) {$G/P_I$};
\node at (-2.5,-2.5) (d) {$\pi_I$};
\node at (2.6,-2.5) (d) {$\overline{\pi}_I$};
\end{scope}
\begin{scope}
\draw[right hook->, thick] (a) to (b);
\draw[->, thick] (a) to (c);
\draw[->, thick] (b) to (c);
\end{scope}
\end{tikzpicture}
\end{figure}
\noindent Furthermore, the fiber $(\overline{\pi}_I)^{-1}(x)$ over a point $x \in G/P_I$ is isomorphic to
a wonderful embedding of $\pi_I^{-1}(x)$.

\subsection{Complete Quadrics}\label{S:complete quadrics}

There is a vast literature on the variety $\ms{X}$ of complete quadrics.  See \cite{Laksov87} for a survey.
We briefly recall the relevant definitions.

Let $\text{Sym}_n$ denote the space of invertible symmetric $n\times n$ matrices and let $X_0$ denote the
projectivization of $\mt{Sym}_n$.
Thus, $X_0$ is identified with the symmetric space $\mt{SL}_n/\widetilde{\mt{SO}}_n$, or, equivalently,
with the space of smooth quadric hypersurfaces in $\PP^{n-1}$.

The classical definition of $\ms{X}$ (see \cite{Schubert, Semple48, Tyrrell56}) is as the closure of the image of the map
$$
[A] \mapsto ([\Lambda^1(A)], [\Lambda^2(A)], \dots, [\Lambda^{n-1}(A)]) \in \prod_{i=1}^{n-1} \PP(\Lambda^i(\text{Sym}_n)).
$$

The modern definition of $\ms{X}$ is as the wonderful embedding of $X_0\cong \mt{SL}_n/\widetilde{\mt{SO}}_n$.
Let $I\subseteq [n-1]$ and let $P_I$ be the corresponding standard parabolic subgroup.
It follows from the results of Vainsencher in \cite{Vainsencher82} that a point $\mathcal{P} \in \ms{X}$ is described by the data of a flag
\begin{equation}\label{E:flag}
\mathcal{F}: V_0 = 0 \subset V_1 \subset \dots \subset V_{s-1} \subset V_s =\C^n
\end{equation}
and a collection $\mathcal{Q} = (Q_1, \dots Q_s)$ of quadrics, where $Q_i$ is a quadric in $\PP(V_i)$ whose singular locus
is $\PP(V_{i-1})$. Moreover, $\ms{O}^I$ consists of complete quadrics whose flag $\mathcal{F}$ is of type $I$
and the map $(\mathcal{F}, \mathcal{Q}) \mapsto \mathcal{F}$ is the $\mt{SL}_n$-equivariant projection
\begin{equation}\label{E:pi_K}
\overline{\pi}_I : \ms{X}^I \rightarrow \mt{SL}_n / P_I.
\end{equation}
The fiber of $\overline{\pi}_I$ over $\mathcal{F} \in \mt{SL}_n / P_I$ is isomorphic to a product of varieties of complete quadrics of
smaller dimension.

\section{Proofs}\label{S:proofs}

To parameterize the $B$-orbits on the space of complete quadrics, first observe that any complete quadric can be transformed
by the action of a suitable element of $B$ to a complete quadric whose associated flag is torus-fixed.
Thus, the study of $B$-orbits of complete quadrics reduces to two separate problems:  the (easy) enumeration of torus-fixed partial
flags and, for each such flag, the enumeration of orbits which contain a complete quadric whose defining flag is the given flag.

We are ready to prove Theorem \ref{T:Main1}, namely
\begin{align}\label{A:main}
b(\ms{X}) = \sum_{ \lambda^1 \times \cdots \times \lambda^k \in C(n)}  f^{\lambda^1 \times \cdots \times \lambda^k},
\end{align}
where
$C(n) = \{ \lambda^1 \times \cdots \times \lambda^k :\ \lambda^i \ \text{is a partition and}\
|\lambda^1 \times \cdots \times \lambda^k| = n \}$.

\begin{proof}[Proof of Theorem \ref{T:Main1}]

Let $V$ denote an ambient vector space of dimension $n$. Recall that in the case of the trivial flag, $\mathcal{F}: 0 \subset V$,
the $B$-orbits of non-degenerate quadrics are in one-to-one correspondence with symmetric $n\times n$ permutation matrices
or, equivalently, the involutions of $W$.
	
In general, suppose that our flag has the form
$$
\mathcal{F}: 0 \subset V_{i_1} \subset V_{i_1 + i_2} \subset \dots \subset
V_{i_1 + i_2 + \cdots + i_{k-1}} \subset V_{i_1 + i_2 + \cdots + i_k} = V
$$
where $i_1 + i_2 + \cdots + i_k = n$ is a composition of $n$. Here, each $V_j$ is a vector space of dimension $j$.
To give a torus-fixed flag of this form is equivalent to choosing $i_1$ standard basis vectors to span $V_{i_1}$, $i_2$
more standard basis vectors to span $V_{i_1+ i_2}$, and so on. It follows that there are ${n \choose i_1, i_2, \dots, i_k}$
torus-fixed flags of this type.
	
Given such a torus-fixed flag, to specify a complete quadric is equivalent to specify a non-degenerate quadric on each successive
quotient space
$$
V_{i_1 + i_2 + \cdots + i_j} / V_{i_1 + i_2 + \cdots + i_{j-1}},\ 1 \leq j \leq k.
$$
Let $I\subseteq [n-1]$ denote the type of a partial flag $\mc{F}$ and let $B_{\mc{F}} \subset B$ denote the stabilizer of
$\mathcal{F}$, viewed as a point in the appropriate partial flag variety.
When the subgroup $B_\mathcal{F}$ is restricted to one of the successive quotient spaces, it acts as the full Borel subgroup of
upper-triangular matrices.  Therefore, the total number of $B$-orbits of quadrics of this type is
equal to $I(i_1) I(i_2) \cdots I(i_k)$.

It follows from these observations that the number of $B$-orbits in the space of complete quadrics is
\begin{align}\label{A:compositions}
b(\ms{X}) = \sum_{ \stackrel{ \gamma \in Comp(n) }{\gamma= (i_1,\dots,i_k)} } {n \choose i_1, i_2, \dots,i_k}
I(i_1) I(i_2) \cdots I(i_k),
\end{align}
where the summation is over all compositions of $n$. Combined with (\ref{A:I=sumflambda}) and  (\ref{A:fcross}),
(\ref{A:compositions}) gives the desired equality (\ref{A:main}).

\end{proof}

\subsection{Modified-Hermite polynomials}

Let us define {\em modified-Hermite polynomials}, $H_k(y)$, $k=0,1,\dots$ by
\begin{align}\label{A:hermite}
H(x;y):= e^{y(x+x^2/2)} = \sum_{k\geq0 } \frac {H_k(y)}{k!} x^k.
\end{align}
Differentiating with respect to $x$ and rewriting the generating series give
\begin{align}\label{A:recurrence}
H_{k+1}(y) = y(H_k(y) + k H_{k-1}(y)),\ k\geq 1.
\end{align}
The data in the following table is easily verified:
\begin{center}
\begin{tabular}{c|c}
$k$ & $H_k(y)$ \\
\hline
0 & 1 \\
1 & y \\
2 & $y^2 + y$ \\
3 & $y^3 + 3y^2$ \\
4 & $y^4 + 6y^3 + 3 y^2$
\end{tabular}
\end{center}

We are ready to prove our second theorem, which states
\begin{align*}
b(\ms{X})= a_{n,1} H_n (1) + a_{n,2} H_n (2) + \cdots + a_{n,n} H_n(n),
\end{align*}
where ${\ds a_{n,r} = \sum_{i=0}^{n-r} (-1)^i {r+i \choose r}}$.

\begin{proof}[Proof of Theorem \ref{T:second}]

Let $Q(x)$ be, as in (\ref{A:involution series}), the exponential generating series for the number of involutions. Clearly, $H(x,y)= Q(x)^y$.
Expanding $Q(x)^k$, we see that the coefficient $H_n(k)$ of $x^n/n!$ in $Q(x)^k$ is equal to
\begin{align} \label{A:Borels1}
H_n(k) = \sum_{ j_1 + j_2 + \cdots + j_k =n } {n \choose j_1, j_2, \dots, j_k}
I(j_1) I(j_2) \cdots I(j_k),
\end{align}
where the summation is over all $k$-sequences of non-negative integers $j_i$ summing to $n$ ([\cite{Wilf06}, Section 2.3]).
Note that $j_i$, for some $i\in [k]$, is allowed to be equal to 0.

Our goal is to express the right hand side of (\ref{A:compositions}) in terms of $H_n(k)$'s.
To this end, we compute the coefficient of $x^n/n!$ in $\ds{\sum_{k=1}^n \prod_{r=1}^k ( Q(x) -1)}$, which
is equal to the right hand side of (\ref{A:compositions}).
By using Binomial Theorem, we obtain
\begin{align}\label{A:b_Hermite}
b(\ms{X}) = \sum_{k=1}^n \sum_{j=1}^k (-1)^{k-j} {k \choose j} H_n(j).
\end{align}
We rearrange the summation (\ref{A:b_Hermite}) by collecting $H_n(j)$'s for $j=1,\dots, n$,
and by suitably changing the indices of the new summations. This gives the desired formula that
$$
b(\ms{X}) = \sum_{r=1}^n \sum_{i=0}^{n-r} (-1)^{i} {r+i \choose r} H_n(r).
$$
\end{proof}

\begin{Example}
The number of $B$-orbits in $\ms{X}_3$, the variety of complete conics in $\PP^2$ is
\begin{align*}
\sum_{k=1}^3 \sum_{i=0}^{3-k} (-1)^i {k+i \choose k} H_3(k) =
 (1-2+6)4 + (1-3)16 +54 = 22,
\end{align*}
which is confirmed by the Hasse diagram in Figure \ref{fig:Cells} below.
\end{Example}

\subsection{Geometric Counting}

In this section we give a (geometric) proof of Theorem \ref{T:third}, which states that:

The number of $B$-orbits in complete quadrics $\ms{X}$ is equal to
\begin{align*}
b(\ms{X}) = \sum_{J} \sum_{w \in W^J} 2^{a_J(w)+b_J(w)},
\end{align*}
where the first summation is over those subsets $J$ of $[n-1]= \{1,2,\dots, n-1\}$ which does not contain two consecutive numbers, $a_J(w)$ is the number of descents $i$ of $w$ such that neither $i$ nor $i-1$ is in $J$, and $b_J(w)$ is the number of descents $i$ of $w$ such that $w(i+1) < w(i-1) < w(i)$, $i-1 \in J$ and $i \notin J$.

The proof follows from the following Lemma:
\begin{Lemma}\label{L:third}
For $J\subseteq \varDelta$, let $w_J$ denote the longest element in the subgroup $W_J$. Then
\begin{align*}
b(\ms{X}) = \sum_{J} \sum_{w \in W^J} 2^{r_J(w)},
\end{align*}
where the first summation is over those subsets $J\subset \varDelta =\{\alpha_1,\dots,\alpha_{n-1}\}$ satisfying
$(\alpha_i,\alpha_j )=0$ for all $\alpha_i \neq \alpha_j \in J$, and $r_J(w)$ is the number of $\alpha_i$ not in $J$ such that $-w(\alpha_i + w_J(\alpha_i)) > 0$.
\end{Lemma}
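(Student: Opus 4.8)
The plan is to use the cell decomposition of $\ms{X}$ described by De~Concini--Springer \cite{DS85} and Strickland \cite{Strickland86}, together with the result of \cite{BL87} that each cell is a union of $B$-orbits, and then count $B$-orbits cell by cell. First I would recall the structure of the cells: for each $G$-orbit $\ms{O}^I$ in $\ms{X}$ and each $w$ in the appropriate set of minimal-length coset representatives, there is a cell, and the number of $B$-orbits inside that cell is $2$ raised to a power recording how many of the ``active'' simple reflections get doubled. Concretely, the recursive fibration $\overline{\pi}_I : \ms{X}^I \to \mt{SL}_n/P_I$ exhibits the fiber as a product of smaller varieties of complete quadrics; pulling the De~Concini--Springer cell stratification through this fibration, a cell is indexed by a pair: a subset $I \subseteq \varDelta$ (the orbit type, which must consist of pairwise orthogonal simple roots so that the fiber is a product of $\ms{X}_2$'s and a point --- this is where the ``$J$ contains no two consecutive numbers'' restriction comes from), and a minimal coset representative $w \in W^J$ governing the position of the flag in $G/P_J$.

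Next I would pin down the exponent $r_J(w)$. The point is that $\ms{X}_2$ (complete conics in $\PP^1$, equivalently $\ms{X}$ for $n=2$) has exactly $3 = 2^1$ orbits of $B$, while the trivial one-dimensional piece contributes $1 = 2^0$; so each "block" of size $2$ in $J$ either contributes a factor of $2$ or a factor of $1$ depending on whether the relevant root, after being acted on by $w$ and the longest element $w_J$ of $W_J$, is sent to a negative root. Following the Bruhat-cell bookkeeping in \cite{DS85}, the factor attached to a simple root $\alpha_i \notin J$ is $2$ precisely when $-w(\alpha_i + w_J(\alpha_i)) > 0$, i.e. when the corresponding one-parameter direction in the open part of the fiber survives into the same $B$-orbit; otherwise it is absorbed. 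Summing $\prod 2 = 2^{r_J(w)}$ over all valid $(J,w)$ then gives the total $B$-orbit count, which by Theorem~\ref{T:Main1} equals $b(\ms{X})$.

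The main obstacle I expect is the precise identification of which directions get doubled --- that is, justifying the exact combinatorial condition $-w(\alpha_i + w_J(\alpha_i)) > 0$ rather than some variant. This requires carefully tracking, through the fibration $\overline{\pi}_I$ and through Strickland's explicit affine charts on $\ms{X}$, how a $B$-orbit in a cell of the base interacts with the $B$-orbits in the fiber $\prod \ms{X}_2 \times \{\mt{pt}\}$; the subtlety is that the stabilizer in $B$ of a flag of type $J$ does not act as the full Borel on each quotient, and whether a given $\ms{X}_2$-factor splits into $2$ or $1$ $B$-orbit depends on the relative position dictated by $w$. Once this local analysis is done, assembling the global sum is routine. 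Finally, translating Lemma~\ref{L:third} into Theorem~\ref{T:third} is a purely combinatorial matter: rewrite $\varDelta$ as $[n-1]$, rewrite ``pairwise orthogonal'' as ``no two consecutive integers'', and expand $-w(\alpha_i + w_J(\alpha_i)) > 0$ into the two cases ($\alpha_{i-1}\notin J$, giving the descent condition counted by $a_J$; and $\alpha_{i-1}\in J$, where $w_J$ swaps $i-1$ and $i$ and the positivity condition becomes $w(i+1) < w(i-1) < w(i)$, counted by $b_J$), so that $r_J(w) = a_J(w) + b_J(w)$.
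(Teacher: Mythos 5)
Your overall skeleton matches the paper's: use the torus cell decomposition of the wonderful embedding (De Concini--Springer, Strickland), invoke Brion--Luna, index cells by pairs $(J,w)$ with $J$ special and $w\in W^J$, and count $B$-orbits cell by cell. However, the step that actually produces the exponent $2^{r_J(w)}$ is where your plan diverges from a workable argument, and as written it has a genuine gap. First, you quote the Brion--Luna result in too weak a form: what is needed is not merely that each cell is a union of $B$-orbits, but that a cell meets each $\mt{SL}_n$-orbit in \emph{at most one} $B$-orbit. With that stronger statement, the number of $B$-orbits in the plus cell $C_p^+$ attached to $(J,w)$ equals the number of $G$-orbits $\ms{O}^K$ meeting $C_p^+$, and the paper then cites De Concini--Springer (Lemma 4.1 of \cite{DS85}, together with Proposition 2.5 of \cite{Strickland86}; see Lemma \ref{L:DS}) to identify this set of $K$'s as the Boolean interval $J \subseteq K \subseteq J \cup \{\alpha_i \in J^c : -w(\alpha_i + w_J(\alpha_i)) > 0\}$, whence the count is exactly $2^{r_J(w)}$. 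Your plan never isolates this interval description; it is the essential input, not a routine piece of ``Bruhat-cell bookkeeping.''

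Second, the substitute mechanism you propose --- that the power of $2$ arises multiplicatively from the fibers of $\overline{\pi}_J$ being products of copies of $\ms{X}_2$, each contributing a factor of $2$ or $1$ --- is both internally inconsistent and numerically off. You first attach the factor to the ``blocks of size $2$ in $J$'' and then to the simple roots $\alpha_i \notin J$; it is the latter that is correct (the doubling records whether $\alpha_i$ may be adjoined to $J$ to produce a larger orbit still meeting the cell), and this has nothing to do with the orbit count of an $\ms{X}_2$-factor in the fiber. Moreover $\ms{X}_2 \cong \PP(\mathrm{Sym}_2)$ has $4$ Borel orbits ($I(2)=2$ in the open orbit plus $2$ Schubert cells in the closed orbit $\PP^1$), not $3=2^1$ as you assert, so the proposed local model does not even reproduce the factor of $2$ you want. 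You correctly flag the identification of the doubled directions as the main obstacle, but the route you sketch for overcoming it would not succeed as stated; the correct resolution is the cited interval lemma rather than a fiber-by-fiber orbit count. (Your final paragraph translating $r_J(w)$ into $a_J(w)+b_J(w)$ is fine and matches the paper's proof of Theorem \ref{T:third}, but that is a separate statement.)
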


Before we start the proof of the lemma, we recall a fundamentally important result of Bia{\l}ynicki-Birula
about cell decompositions of varieties.
Let $X$ be a smooth projective variety over $\C$ on which an algebraic torus $T$ acts with finitely many fixed points.
Let $T'$ be a one-parameter subgroup, whose set of fixed points $X^{T'}$ is equal to that $X^T$ of $T$.
For $p\in X^{T'}$ define the sets $C_p^+ = \{y \in X:\ \ds{\lim_{t \to 0} t \cdot y = p,}\ t \in T' \}$ and
$C_p^- = \{y \in X:\ \ds{\lim_{t \to \infty} t \cdot y = p,}\ t \in T' \}$,
called the {\em plus cell} and {\em minus cell} of $p$, respectively.

\begin{Theorem}[\cite{BB73}]\label{T:BB}
Let $X,T$ and $T'$ be as above. Then
\begin{enumerate}
\item $C_p^+$ and $C_p^-$ are locally closed subvarieties isomorphic to an affine space;
\item if $T_p X$ is the tangent space of $X$ at $p$, then $C_p^+$ (resp., $C_p^-$) is $T'$-equivariantly
isomorphic to the subspace $T_p^+ X$ (resp., $T_p^- X$) of $T_p X$ spanned by the positive (resp., negative)
weight spaces of the action of $T'$ on $T_p X$.
\end{enumerate}
\end{Theorem}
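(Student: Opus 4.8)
The plan is to work with the given one-parameter subgroup $T'\cong\mathbb{G}_m$ and to combine properness of $X$ (to make the relevant limits exist) with smoothness of $X$ near the fixed points (to identify each cell with a linear subspace of a tangent space). Write simply $t\cdot x$ for the action of $T'$, and recall $X^{T'}=X^{T}$. First I would establish that for every $x\in X$ both limits $\lim_{t\to 0}t\cdot x$ and $\lim_{t\to\infty}t\cdot x$ exist and are $T'$-fixed: the orbit map $\mathbb{G}_m\to X$, $t\mapsto t\cdot x$, extends uniquely to a morphism $\PP^1\to X$ by the valuative criterion of properness (valid since $X$ is projective), and its values at $0$ and $\infty$ are $T'$-fixed because $t_0\cdot\bigl(\lim_{t\to 0}t\cdot x\bigr)=\lim_{t\to 0}(t_0t)\cdot x=\lim_{s\to 0}s\cdot x$. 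Uniqueness of these limits gives at once the set-theoretic decompositions $X=\bigsqcup_{p}C_p^{+}=\bigsqcup_{p}C_p^{-}$ over $p\in X^{T'}$, so the real content is the local and global structure of a single cell, say $C_p^{+}$.

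The key step uses smoothness of $X$. The tangent space $T_pX$ carries a linear $T'$-action; write $T_pX=\bigoplus_{m\in\Z}(T_pX)_m$ for its weight decomposition and set $T_p^{+}X=\bigoplus_{m>0}(T_pX)_m$ and $T_p^{-}X=\bigoplus_{m<0}(T_pX)_m$. Since the fixed locus $X^{T'}$ of a torus acting on a smooth variety is itself smooth and $p$ is an isolated point of it, $(T_pX)_0=0$, so $T_pX=T_p^{+}X\oplus T_p^{-}X$. Using complete reducibility of $\mathbb{G}_m$-representations I would choose, on a $T'$-stable affine neighborhood of $p$, semi-invariant regular functions $f_1,\dots,f_n$ whose differentials at $p$ form a weight basis of the cotangent space; these assemble into a $T'$-equivariant morphism $\phi\colon U\to T_pX$ with $\phi(p)=0$, \'etale at $p$ by the Jacobian criterion, hence \'etale on a $T'$-stable shrinking of $U$, still called $U$. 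For the linear action on $T_pX$ one computes directly that the attractor of $0$ is exactly $T_p^{+}X$ (a vector $v=\sum_m v_m$ has $\lim_{t\to 0}t\cdot v=\lim_{t\to 0}\sum_m t^{m}v_m$, which exists iff $v_m=0$ for all $m<0$, and then equals $v_0=0$); transporting this through $\phi$ shows, after a further shrinking of $U$ arranged so that $U$ contains the orbit closures of those of its points lying in $C_p^{+}$, that $C_p^{+}\cap U=\phi^{-1}(T_p^{+}X)$ is closed in $U$ and is carried isomorphically onto $T_p^{+}X$ by $\phi$. In particular $C_p^{+}$ is locally closed and smooth near $p$, with local model an affine space of dimension $\dim T_p^{+}X$.

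Finally I would globalize. Every $\mathbb{G}_m$-orbit contained in $C_p^{+}$ has its $t\to 0$ limit equal to $p\in U$, so its translates eventually lie in the $T'$-stable open set $C_p^{+}\cap U$; hence the $\mathbb{G}_m$-translates of $C_p^{+}\cap U$ cover $C_p^{+}$, which is therefore connected, smooth, of pure dimension $\dim T_p^{+}X$, and locally modeled on affine space. To upgrade the local identification to a global isomorphism $\psi\colon C_p^{+}\to T_p^{+}X$, I would extend $\phi$ by equivariance: for $x\in C_p^{+}$ pick $t$ with $t\cdot x\in U$ and set $\psi(x)=t^{-1}\cdot\phi(t\cdot x)$, which is well defined because $\phi\colon U\to T_pX$ is a morphism of $T'$-varieties. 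One then checks that $\psi$ is $T'$-equivariant, \'etale (it is an isomorphism onto $T_p^{+}X$ on each translate $t^{-1}(C_p^{+}\cap U)$), and injective (if $\psi(x)=\psi(y)$ then $t\cdot x$ and $t\cdot y$ agree near $p$, where $\phi$ is a local isomorphism, for $t$ near $0$, whence $x=y$, using uniqueness of limits); an \'etale morphism of varieties over $\C$ that is bijective on points is an isomorphism, so $\psi$ is the desired $T'$-equivariant isomorphism $C_p^{+}\cong T_p^{+}X$. The statement for $C_p^{-}$ follows by applying everything to the one-parameter subgroup $t\mapsto t^{-1}$, which interchanges plus and minus cells and interchanges $T_p^{+}X$ with $T_p^{-}X$.

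The hard part is the middle paragraph: manufacturing enough $\mathbb{G}_m$-semi-invariant coordinates to build the equivariant \'etale model $\phi$, and then choosing the $T'$-stable neighborhood $U$ carefully enough that $C_p^{+}\cap U$ is genuinely identified with the linear space $T_p^{+}X$ rather than merely with an \'etale neighborhood of $0$. This is exactly where smoothness of $X$ is indispensable — the decomposition into affine cells fails for singular $X$ — and in practice it is handled by a Luna-type slice argument or a careful invocation of Zariski's main theorem; the remaining bookkeeping (equivariance and $T'$-stability of everything in sight, matching of weight spaces, and the covering argument of the last paragraph) is routine.
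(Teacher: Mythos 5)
This statement is not proved in the paper at all: it is quoted verbatim, with attribution, as the Bia{\l}ynicki-Birula decomposition theorem from \cite{BB73}, and the authors use it as a black box in the proof of Lemma \ref{L:third}. So there is no in-paper argument to compare yours against; the only question is whether your sketch would stand on its own as a proof of the cited theorem.

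It would not, as written, and you have in fact identified the gap yourself. The first paragraph (existence and uniqueness of limits via the valuative criterion, hence the set-theoretic decomposition into cells) and the weight-space computation for the linear model are fine. But the entire content of the theorem sits in the middle step, where you assert that after shrinking a $T'$-stable affine neighborhood $U$ one gets $C_p^{+}\cap U=\phi^{-1}(T_p^{+}X)$ carried \emph{isomorphically} onto $T_p^{+}X$. Neither equality nor isomorphism is automatic for an \'etale $\phi$: a point $x\in U$ with $\phi(x)\in T_p^{+}X$ satisfies $\phi(t\cdot x)\to 0$, but its limit $\lim_{t\to 0}t\cdot x$ in $X$ may be a fixed point other than $p$ (or $\phi$ may identify it with another point of $\phi^{-1}(0)$), so the inclusion $\phi^{-1}(T_p^{+}X)\subseteq C_p^{+}$ can fail; and even on the correct locus, \'etale plus contracting does not yield an isomorphism onto the linear space without an additional argument (the classical proof works instead with the grading of the coordinate ring of a $T'$-stable affine chart, using positive-weight generators and a graded Nakayama-type argument, or in modern treatments the representability of the attractor functor). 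Your closing remark that this is ``handled by a Luna-type slice argument or a careful invocation of Zariski's main theorem'' is an accurate diagnosis of where the work lies, but it means the proof is an outline of the standard strategy rather than a proof. For the purposes of this paper, citing \cite{BB73} as the authors do is the appropriate course.
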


As a consequence of Theorem \ref{T:BB}, there exists a filtration
$$
X^{T'}  = V_0 \subset V_1 \subset \cdots \subset V_n = X,\qquad n = \dim X,
$$
of closed subsets such that for each $i=1,\dots,n$, $V_i - V_{i-1}$ is the disjoint union of
the plus (resp.,  minus) cells in $X$ of (complex) dimension $i$.

\begin{proof}[Proof of Lemma \ref{L:third}]

It is shown by De Concini and Procesi \cite{DP83} that a wonderful embedding $X$ of $G/H$ has a cell decomposition
arising from the action of a suitable maximal torus $T$ of $G$ on $X$.
In \cite{BL87} Brion and Luna show that if a cell of the embedding intersects a $G$-orbit nontrivially, then the intersection is a single
Borel orbit.
Therefore, to count the number of $B$-orbits in $X$ it is enough to determine 1) the number of cells, equivalently,
the number of torus fixed points of $X$; 2) the number of $G$-orbits intersecting a given cell.

For the first item, we refer to a result of Strickland:
\begin{Lemma}[\cite{Strickland86}, Proposition 2.1]\label{L:Strickland}
An $\mt{SL}_n$-orbit $\ms{O}^K \subset \ms{X}$, $K\subseteq \varDelta$ contains a torus fixed point if and only if
$(\alpha_i,\alpha_j)=0$ for all $\alpha_i \neq \alpha_j \in K$.
Furthermore, in this case, there exists a a bijection between the torus fixed points contained in $\ms{O}^K$
and the set of minimal coset representatives $W^K$.
\end{Lemma}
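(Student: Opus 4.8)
The plan is to work directly with Vainsencher's description of points of $\ms{X}$ recalled in Section~\ref{S:complete quadrics}: a point of $\ms{O}^K$ consists of a flag $\mc{F}: 0 = V_0 \subset V_1 \subset \cdots \subset V_s = \C^n$ of type $K$ together with a collection $\mc{Q} = (Q_1, \dots, Q_s)$ in which $Q_j$ is a \emph{nondegenerate} quadric on the successive quotient $V_j/V_{j-1}$. The maximal torus $T$ of diagonal matrices acts on this data, and since $\ms{O}^K$ is $\mt{SL}_n$-stable, a point is $T$-fixed precisely when (i) the flag $\mc{F}$ is a coordinate flag, i.e. each $V_j$ is spanned by standard basis vectors, and (ii) each $Q_j$ is fixed by the torus $T_j$ induced on $V_j/V_{j-1}$. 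First I would record that, under the action $g \cdot A = (g^\top)^{-1} A g^{-1}$ of (\ref{A:SLnaction}), a diagonal $t$ sends a symmetric matrix $A$ to the matrix with entries $t_i^{-1} t_j^{-1} A_{ij}$; thus $[A]$ is $T$-fixed in $\PP(\mt{Sym}_m)$ iff the support $\{(i,j): A_{ij} \neq 0\}$ is carried by a single weight of the $T$-action.

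The technical heart, which I expect to be the main obstacle, is the following classification on a coordinate space of dimension $m$: a nondegenerate quadric is $T$-fixed if and only if $m \le 2$, and in that case it is unique up to scalar. For $m = 1$ this is trivial; for $m = 2$ the unique fixed class is the anti-diagonal form $x_1 x_2 = 0$. For the converse I would argue that fixedness forces the weights $-\vi_i - \vi_j$ attached to the nonzero entries $A_{ij}$ to coincide, and that these weights are pairwise distinct across distinct index pairs: two diagonal entries give $-2\vi_i \neq -2\vi_k$, a diagonal entry is incompatible with any off-diagonal one, and two off-diagonal entries give $-\vi_i - \vi_j \neq -\vi_k - \vi_l$. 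Hence the support of a $T$-fixed $A$ consists of a single diagonal entry or a single off-diagonal pair, and invertibility then forces $m \le 2$.

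Next I would translate this into the stated combinatorial condition. By the identification in Section~\ref{S:partialflagvariety}, a flag of type $K$ has intermediate dimensions $K^{\mt{c}} = [n-1] \setminus K$, so the sizes of the successive quotients are the gaps between consecutive elements of $\{0\} \cup K^{\mt{c}} \cup \{n\}$. A quotient has size $\geq 3$ exactly when two consecutive integers are absent from $K^{\mt{c}}$, that is, when $K$ contains two consecutive integers. By the classification above, $\ms{O}^K$ contains a $T$-fixed point iff every quotient has size $\le 2$, which holds iff $K$ contains no two consecutive integers; since in type $A_{n-1}$ distinct simple roots satisfy $(\alpha_i, \alpha_j) = 0$ unless $|i-j| = 1$, this is exactly the condition $(\alpha_i, \alpha_j) = 0$ for all $\alpha_i \neq \alpha_j \in K$. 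This establishes the equivalence.

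Finally, for the bijection, assume $K$ contains no two consecutive elements, so each quotient has size $\le 2$ and the $T$-fixed quadric on it is uniquely determined. Then a $T$-fixed point of $\ms{O}^K$ is determined by its underlying coordinate flag, and projecting via $\overline{\pi}_K : \ms{X}^K \to \mt{SL}_n/P_K$ restricts to a bijection between the $T$-fixed points of $\ms{O}^K$ and the $T$-fixed flags of type $K$, which by the flag-variety theory recalled in Section~\ref{S:partialflagvariety} are canonically indexed by $W^K$. As a consistency check, the size-$2$ quotients correspond bijectively to the elements of $K$, so counting coordinate flags with unordered basis vectors inside each size-$2$ quotient gives $n!/2^{|K|}$, which agrees with $|W^K| = |W|/|W_K| = n!/2^{|K|}$ because the commuting reflections indexed by $K$ generate $W_K \cong (\Z/2)^{|K|}$.
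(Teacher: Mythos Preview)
The paper does not actually prove Lemma~\ref{L:Strickland}; it is quoted verbatim from \cite{Strickland86}, Proposition~2.1, and used as a black box inside the proof of Lemma~\ref{L:third}. So there is no ``paper's own proof'' to compare against.

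That said, your direct argument is correct and self-contained. The key step---that a nondegenerate quadric on an $m$-dimensional coordinate space has a $T$-fixed projective class only when $m\le 2$, because the torus weights $-\vi_i-\vi_j$ on the entries of a symmetric matrix are pairwise distinct---is exactly the computation underlying Strickland's result, and your translation of ``all successive quotients have dimension $\le 2$'' into ``$K$ contains no two consecutive indices,'' hence $(\alpha_i,\alpha_j)=0$ for all $\alpha_i\neq\alpha_j\in K$, matches the paper's conventions in Sections~\ref{S:partialflagvariety}--\ref{S:complete quadrics}. The bijection with $W^K$ via the equivariant projection $\pi_K:\ms{O}^K\to \mt{SL}_n/P_K$ is likewise correct, since on each quotient of dimension $\le 2$ the $T$-fixed nondegenerate quadric is unique, and the consistency check $|W^K|=n!/2^{|K|}$ is a nice confirmation. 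One small cosmetic point: in the last paragraph the relevant projection for points of $\ms{O}^K$ is $\pi_K$ rather than $\overline{\pi}_K$.
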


A subset of simple roots $K\subset \{\alpha_1,\dots, \alpha_{n-1}\}$ is called {\em special}, if
$K$ is as in Lemma \ref{L:Strickland}: $(\alpha_i,\alpha_j)=0$ for all $\alpha_i \neq \alpha_j \in K$.

For the second item, we need a result of De Concini and Springer [\cite{DS85}, Lemma 4.1] combined with
[\cite{Strickland86}, Proposition 2.5]:
\begin{Lemma}\label{L:DS}
Let $\ms{O}^J$, $J\subset \varDelta$ be an $\mt{SL}_n$-orbit containing a torus fixed point $p\in \ms{O}^J$.
Let $C_p^+$ denote the plus cell centered at $p$. Let $w\in W^J$ denote the minimal length coset representative corresponding
to $p$ and $w_J$ denote the longest element of $W_J$. Then, an orbit $\ms{O}^K$, $K\subset \varDelta$
intersects $C_p^+$ non-trivially if and only if
$J \subset K \subset J \cup \{ \alpha_i \in J^c:\  -w( \alpha_i + w_J(\alpha_i)) > 0\}$.
\end{Lemma}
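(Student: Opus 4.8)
As the surrounding text indicates, this is the specialization to $\ms{X}$ of \cite[Lemma~4.1]{DS85}, with the weight data supplied by \cite[Proposition~2.5]{Strickland86}; the plan is to reassemble that argument in the present notation, its whole content being a $T$-weight computation in $T_p\ms{X}$ fed into the Bia\l ynicki--Birula theorem. First I would reduce to the tangent space: by Theorem~\ref{T:BB}(2), $C_p^+$ is $T'$-equivariantly isomorphic to the positive-weight subspace $T_p^+\ms{X}\subseteq T_p\ms{X}$ with $p$ at the origin, and since the fixed locus of $T'$ is $\ms{X}^T$ and the $\mt{SL}_n$-orbit stratification restricts to a stratification of the locally closed set $C_p^+$, an orbit $\ms{O}^K$ meets $C_p^+$ exactly when the $T$-weight line(s) at $p$ whose directions move $p$ into $\ms{O}^K$ lie in $T_p^+\ms{X}$. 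So it remains to decompose $T_p\ms{X}$ into weight spaces, record which weight directions enter which orbit, and impose the positivity cut coming from $T'$.

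For the decomposition I would use the recursive structure of $\ms{X}$. The fibration $\overline{\pi}_J\colon\ms{X}^J\to\mt{SL}_n/P_J$ of Section~\ref{S:complete quadrics}, together with a De Concini--Procesi $T$-stable affine chart about $p$, gives a $T$-equivariant splitting $T_p\ms{X}=T_p\ms{X}^J\oplus N_{\ms{X}^J/\ms{X}}|_p$. Since $\ms{O}^J$ is open in its closure $\ms{X}^J$ and $p\in\ms{O}^J$, every direction in $T_p\ms{X}^J$ keeps $p$ inside $\ms{O}^J$ — this is what forces $J\subseteq K$. By transversality of the boundary divisors (axiom (3) of wonderful embeddings), $N_{\ms{X}^J/\ms{X}}|_p=\bigoplus_{\alpha_i\in J^c}L_i$ with each $L_i=N_{X_i/\ms{X}}|_p$ a one-dimensional $T$-weight line, and for $S\subseteq J^c$ a generic deformation of $p$ inside $\bigoplus_{\alpha_i\in S}L_i$ lands in $\ms{O}^{J\cup S}$. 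Hence $\ms{O}^K$ meets $C_p^+$ if and only if $J\subseteq K$ and $L_i\subseteq T_p^+\ms{X}$ for every $\alpha_i\in K\setminus J$, i.e. if and only if $J\subseteq K\subseteq J\cup\{\alpha_i\in J^c:\ L_i\text{ has positive }T'\text{-weight}\}$.

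The one genuine computation, and the main obstacle, is to identify the $T$-weight of $L_i$. At the base point $p_0\in\ms{O}^J$ attached to the identity coset — the minimal length element of $W^J$ — the point $p_0$ lies over the standard flag in $\mt{SL}_n/P_J$, and the De Concini--Procesi / Strickland chart makes the normal weights explicit: the weight of $N_{X_i/\ms{X}}|_{p_0}$ is $-(\alpha_i+w_J(\alpha_i))$, where $w_J(\alpha_i)$ records the twist of the conormal line along the fiber direction of $\overline{\pi}_J$ (when $\alpha_i$ is orthogonal to $J$ one has $w_J(\alpha_i)=\alpha_i$ and this reads $-2\alpha_i$, the negative of a spherical root of $\mt{SL}_n/\widetilde{\mt{SO}}_n$, recovering the closed-orbit case $J=\emptyset$). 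Transporting by a representative of $w\in W^J$ via Strickland's bijection $W^J\leftrightarrow(\ms{O}^J)^T$ (Lemma~\ref{L:Strickland}) multiplies the weight by $w$, so $L_i$ has weight $-w(\alpha_i+w_J(\alpha_i))$; choosing the generic one-parameter subgroup $T'$ so that a character is $T'$-positive precisely when it is a positive root, the condition $L_i\subseteq T_p^+\ms{X}$ becomes $-w(\alpha_i+w_J(\alpha_i))>0$. Combining this with the previous paragraph yields the asserted inclusions $J\subseteq K\subseteq J\cup\{\alpha_i\in J^c:\ -w(\alpha_i+w_J(\alpha_i))>0\}$.
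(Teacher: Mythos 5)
The paper does not actually prove this lemma: it presents it as a quotation of \cite[Lemma~4.1]{DS85} combined with \cite[Proposition~2.5]{Strickland86}, so there is no internal argument to compare yours against line by line. Your reconstruction follows exactly the route those sources take and that the paper implicitly relies on: reduce to the Bia{\l}ynicki--Birula description $C_p^+\cong T_p^+\ms{X}$, split $T_p\ms{X}=T_p\ms{X}^J\oplus\bigoplus_{\alpha_i\in J^c}N_{X_i/\ms{X}}|_p$ using transversality of the boundary divisors, observe that $J\subseteq K$ is forced because $p\in\overline{\ms{O}^K}=X^K$ whenever $\ms{O}^K$ meets $C_p^+$, and then test each normal line against the positivity cut of $T'$. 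That skeleton is correct, and your observation that moving off in the normal directions indexed by $S\subseteq J^c$ lands in $\ms{O}^{J\cup S}$ is consistent with the paper's convention $X^I=\bigcap_{i\notin I}X_i$.

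The one substantive gap is the point you yourself flag as ``the one genuine computation'': the assertion that $N_{X_i/\ms{X}}|_{p_0}$ has $T$-weight $-(\alpha_i+w_J(\alpha_i))$ at the base point, and hence weight $-w(\alpha_i+w_J(\alpha_i))$ at the fixed point indexed by $w\in W^J$. You state this with a heuristic about the conormal twist along the fiber of $\overline{\pi}_J$ and a consistency check at $J=\emptyset$, but you do not derive it; yet the entire content of the lemma lives in this weight. Deriving it requires writing down the De Concini--Procesi affine chart at $p_0$ (equivalently, the local coordinates on $\ms{X}$ coming from the restricted root system of $(\mt{SL}_n,\sigma)$, in which the boundary coordinate for $\alpha_i$ transforms by the negative of the corresponding spherical root, twisted by $w_J$ along $\ms{O}^J$), which is precisely the computation carried out in Strickland's Proposition~2.5. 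So your proposal is a faithful outline of the cited proof rather than a self-contained one: it has a citation-shaped hole in the same place the paper inserts its citation. If you want a complete argument, that weight computation is the piece to supply.
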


It follows from Lemma \ref{L:Strickland} and Lemma \ref{L:DS} that to count the number of $B$-orbits in $\ms{X}$
it is enough to count the number of subsets of $\{ \alpha_i \in J^c:\  -w( \alpha_i + w_J(\alpha_i)) > 0\}$ for
each $w\in W^J$, $J$ special.
Therefore,
\begin{align*}
b(\ms{X}) = \sum_{J} \sum_{w \in W^J} 2^{r_J(w)},
\end{align*}
where $r_J(w)$ is the number of $\alpha_i$ not in $J$ such that $-w(\alpha_i + w_J(\alpha_i)) > 0$
and the first summation is over all special subsets $J\subset \varDelta$.

\end{proof}

\begin{proof}[Proof of Theorem \ref{T:third}]

Let $J\subset \varDelta$ be a special subset, and let $w_J$ be the longest element in $W_J$.
Then $w_J = \prod_{\alpha_i \in J} \sigma_i$, where $\sigma_i$ is the simple transposition
corresponding to $\alpha_i $.
Let $\alpha_i \notin J$. Then
$$
-w(\alpha_i+w_J (\alpha_i) )=
\begin{cases}
-2\vi_{w(i)}+2\vi_{w(i+1)} & \text{if}\ \{\alpha_{i-1},\alpha_{i+1}\} \cap J = \emptyset,\\
-\vi_{w(i-1)}-\vi_{w(i)}+2\vi_{w(i+1)} & \text{if}\ \{\alpha_{i-1},\alpha_{i+1}\} \cap J = \{ \alpha_{i-1}\},\\
-2\vi_{w(i)}+\vi_{w(i+1)}+\vi_{w(i+2)} & \text{if}\ \{\alpha_{i-1},\alpha_{i+1}\} \cap J = \{ \alpha_{i+1}\},\\
-\vi_{w(i-1)}-\vi_{w(i)}+\vi_{w(i+1)}+\vi_{w(i+2)} & \text{if}\ \{\alpha_{i-1},\alpha_{i+1}\} \cap J = \{ \alpha_{i-1},\alpha_{i+1}\}.
\end{cases}
$$
Arguing as in [Proposition 2.6, \cite{Strickland86}], we obtain $-w(\alpha_i+w_J (\alpha_i) )>0$ if and only if
either $\alpha_{i-1} \notin J$ and $w(i+1) < w(i)$, or $\alpha_{i-1} \in J$ and $w(i+1) < w(i-1) < w(i)$.
It follows from Lemma \ref{L:third} that $r_J(w) = a_J(w) + b_J(w)$,
and the proof is complete.
		
\end{proof}

\begin{Remark}
It follows from Lemma \ref{L:DS} that the inclusion poset formed by the $B$-orbits
within each plus-cell is a Boolean lattice.
\end{Remark}

In Figure \ref{fig:Cells} (at the end of the paper) we depict the cell decomposition of $\ms{X}_3$, the variety of complete conics in $\PP^2$.
Each colored disk represents a $B$-orbit and {\em edges} stand for the covering relations between closures of $B$-orbits.
A {\em cell} is a union of all $B$-orbits of the same color.
We include the label $I\subseteq \{1,2\}$, which indicates the $\mt{SL}_3$-orbit containing the given
$B$-orbit. We use the label $T$ to indicate the presence of a fixed point under the maximal torus $T$ of $\mt{SL}_3$.

\subsection{Upper and Lower Bounds}\label{S:upper and lower}

For a positive integer $n\in \Z$, define $\psi(n) = I(n)/n!$, the ratio of number of involutions to the number of permutations.
Then $\psi(n) \leq 1$.

It follows from (\ref{A:compositions}) that
\begin{align}\label{A:compositionsMuir}
b(\ms{X}) = n! \sum_{
\stackrel{ \gamma \in Comp(n) }{\gamma= (\gamma_1,\dots, \gamma_k)}} \prod_{i=1}^k \psi(\gamma_i).
\end{align}

We are ready to prove Theorem \ref{T:final}, which states:

For all sufficiently large $n$, the following inequalities hold: $b(\ms{X}_e) = b_n n! <F_n n! < b(\ms{X}) < 2^{n-1} n!$.

\begin{proof}
Since each term of the summation (\ref{A:compositionsMuir}) is at most 1, and due to the bijection (\ref{A:bijectionfromcompositions}),
the right hand side of (\ref{A:compositionsMuir}) is bounded by
\begin{align}
n! \sum_{\gamma \in Comp(n) } 1 = n! 2^{n-1}.
\end{align}
This gives the desired upper bound for $b(\ms{X})$.

For the lower bound we use Lemma \ref{L:Strickland}: an $\mt{SL}_n$-orbit
$\ms{O}^K \subset \ms{X}$, $K\subseteq \varDelta$ contains a torus fixed point if and only if
$(\gamma_i,\gamma_j)=0$ for all $\gamma_i \neq \gamma_j \in K$.
Equivalently, indices of the elements from $K$ are non-consecutive. The total number of such
$K \subset \{1,\dots, n-1\}$ is given by the $n$-th Fibonacci number, viewing the elements of
$K$ as colorings of the corresponding nodes of the Coxeter graph of $\varDelta$. Testing the first node being colored or not gives
the recurrence of the Fibonacci numbers.

It follows from \cite{Brion98}, Section 1.2 that the number of $B$-orbits in $\ms{O}^K$, for $K$ special is $n!$.
Combining these observations we see that the total number of $B$-orbits in $X$ is larger than $n! F_n$.
Hence we obtain the inequalities:
$$
F_n n! < b(\ms{X}) < 2^{n-1} n! \ (\text{for all}\ n\geq 3).
$$
It is easy to check that the inequalities are in fact equalities for $n=1$ and $2$.

Next we prove Lemma \ref{L:thelemma}, which states that the number $b(\ms{X}_e)$ of $B\times B$-orbits in the minimal equivariant embedding $\ms{X}_e$ is $n!$ times
the number of ordered set partitions $b_n$.

\begin{proof}
We know from [Section 5.2, \cite{Wilf06}] that the generating series $\ds{\sum_{n\geq 0} \frac{b_n}{n!}x^n }$ is
$\ds{\frac{1}{2-e^x}}$. For $n\in \Z$ positive, let $M(n)$ denote the sum
$$
M(n)= \sum_{k\geq0} \sum_{\stackrel{i_1+\cdots + i_k=n}{i_j \geq 1}} { n! \choose i_1,i_2,\dots,i_k},
$$
and set $M(0)=1$.
Since
$$
\ds{
\sum_{n\geq 1} \sum_{\stackrel{i_1+\cdots + i_k=n}{i_j \geq 1}} { n! \choose i_1,i_2,\dots,i_k} \frac{x^n}{n!}=
\sum_{n\geq 1} \sum_{\stackrel{i_1+\cdots + i_k=n}{i_j \geq 1}}  \frac{x^n}{i_1! i_2! \cdots i_k!} = \left( \sum_{n\geq 1} \frac{x^n}{n!} \right)^k
} = (e^x-1)^k,$$
the generating series $\ds{\sum_{n\geq 0}\frac{M(n)}{n!} x^n}$ is equal to $\ds{\frac{1}{1-(e^x-1)}=\frac{1}{2-e^x}}.$
Therefore, $M(n)= b_n$ for $n\geq 0$ and it remains to show the equality
$n! M(n) = \ds{\sum_{I \subseteq [n-1]} \frac{(n!)^2}{ n_I}}$, where $n_I = n_1 ! n_2 ! \cdots n_k!$ and
$n_i$ is 1 more than the length of a longest sequence of consecutive integers that appear in $I$.

Let $\gamma = \gamma_1+\cdots + \gamma_k$ be a composition of $n$ and let $I_\gamma$ be the associated
subset under the bijection (\ref{A:bijectionfromcompositions}), the complement of
$\{ \gamma_1,\gamma_1+\gamma_2,\dots \gamma_1+\cdots+\gamma_{k-1} \}$.
It is clear that the lengths of the maximal sequences of consecutive integers in
$I_\gamma$ are $\gamma_1-1,\gamma_2-1,\dots, \gamma_k-1$.
Therefore, $\gamma_1 ! \gamma_2 ! \cdots \gamma_k ! = n_{I_\gamma}$, and hence,
$$
\sum_{I \subseteq [n-1]} \frac{1}{ n_I} = \sum_{k\geq0} \sum_{\stackrel{i_1+\cdots + i_k=n}{i_j \geq 1}} \frac{1}{ i_1! i_2 ! \cdots i_k!}.
$$
Multiplying both sides by $(n!)^2$ gives the desired result that $\ds{n! b_n = b (\ms{X}_e)}$.

\end{proof}

The proof of Theorem \ref{T:final} follows from Lemma \ref{L:thelemma} and comparing the
asymptotics (\ref{A:asymptoticBell}) and (\ref{A:asymptoticFibonacci}) of the ordered Bell numbers and Fibonacci numbers,
respectively.

\end{proof}

\bibliography{Hermite}
\bibliographystyle{plain}


\begin{figure}[htp]
\centering

\begin{tikzpicture}[scale=.56]

\node [shape=circle,draw,fill=blue!10!] at (0,0) (a) {$\emptyset,T$};

\node [shape=circle,draw,fill=purple] at (-6,5) (b1) {$\{\mathbf{1}\},T$};
\node [shape=circle,draw,fill=blue!30!] at (-2,5) (b2) {$\{\mathbf{2}\},T$};
\node [shape=circle,draw,fill=red!20!] at (2,5) (b3) {$\emptyset,T$};
\node [shape=circle,draw,fill=orange!50!] at (6,5) (b4) {$\emptyset,T$};

\node [shape=circle,draw,fill=blue!50!] at (-10,10) (c1) {$\{\mathbf{1}\},T$};
\node [shape=circle,draw,fill=red!20!]  at (-6,10) (c2) {$\{\mathbf{1}\}$};
\node [shape=circle,draw,fill=green!50!] at (-2,10) (c3) {$\{\mathbf{2}\},T$};
\node [shape=circle,draw,fill=orange!50!] at (2,10) (c4) {$\{\mathbf{2}\}$};
\node [shape=circle,draw,fill=red!50!] at (6,10) (c5) {$\emptyset,T$};
\node [shape=circle,draw,fill=yellow!50!] at (10,10) (c6) {$\emptyset,T$};

\node [shape=circle,draw,fill=green!50!] at (-10,15) (d1) {$\{\mathbf{1,2}\}$};
\node  [shape=circle,draw,fill=brown]  at (-6,15) (d2) {$\{\mathbf{1}\},T$};
\node [shape=circle,draw,fill=red!50!] at (-2,15) (d3) {$\{\mathbf{1}\}$};
\node  [shape=circle,draw,fill=green!15]  at (2,15) (d4) {$\{\mathbf{2}\},T$};
\node [shape=circle,draw,fill=yellow!50!] at (6,15) (d5) {$\{\mathbf{2}\}$};
\node  [shape=circle,draw,fill=yellow]  at (10,15) (d6) {$\mathbf{\emptyset},T$};

\node  [shape=circle,draw,fill=brown] at (-6,20) (e1) {$\{\mathbf{1,2}\}$};
\node  [shape=circle,draw,fill=green!15] at (-2,20) (e2) {$\{\mathbf{1,2}\}$};
\node  [shape=circle,draw,fill=yellow]  at (2,20) (e3) {$\{\mathbf{1}\}$};
\node  [shape=circle,draw,fill=yellow]  at (6,20) (e4) {$\{\mathbf{2}\}$};

\node [shape=circle,draw,fill=yellow] at (0,25) (f) {$\{\mathbf{1,2}\}$};

\filldraw[dashed, thick,fill=red] (a) to (b1);
\draw[dashed] (a) to (b2);
\draw[dashed] (a) to (b3);
\draw[dashed] (a) to (b4);

\draw[dashed] (b1) to (c1);
\draw[dashed] (b1) to (c2);

\draw[dashed] (b2) to (c3);
\draw[dashed] (b2) to (c4);

\draw[-, ultra thick] (b3) to (c2);
\draw[dashed] (b3) to (c3);
\draw[dashed] (b3) to (c5);
\draw[dashed] (b3) to (c6);

\draw[dashed] (b4) to (c1);
\draw[-,ultra thick] (b4) to (c4);
\draw[dashed] (b4) to (c5);
\draw[dashed] (b4) to (c6);

\draw[dashed] (c1) to (d1);
\draw[dashed] (c1) to (d2);
\draw[dashed] (c1) to (d3);

\draw[dashed] (c2) to (d2);
\draw[dashed] (c2) to (d3);

\draw[-,ultra thick] (c3) to (d1);
\draw[dashed] (c3) to (d4);
\draw[dashed] (c3) to (d5);

\draw[dashed] (c4) to (d1);
\draw[dashed] (c4) to (d4);
\draw[dashed] (c4) to (d5);

\draw[ultra thick] (c5) to (d3);
\draw[dashed] (c5) to (d4);
\draw[dashed] (c5) to (d6);

\draw[ultra thick] (c6) to (d5);
\draw[dashed] (c6) to (d2);
\draw[dashed] (c6) to (d6);

\draw[dashed] (d1) to (e1);
\draw[dashed] (d1) to (e2);

\draw[dashed] (d1) to (c2);

\draw[ultra thick] (d2) to (e1);
\draw[dashed] (d2) to (e3);

\draw[dashed] (d3) to (e2);
\draw[dashed] (d3) to (e3);

\draw[ultra thick] (d4) to (e2);
\draw[dashed] (d4) to (e4);

\draw[dashed] (d5) to (e1);
\draw[dashed] (d5) to (e4);

\draw[ultra thick] (d6) to (e3);
\draw[ultra thick] (d6) to (e4);

\draw[dashed] (e1) to (f);
\draw[dashed] (e2) to (f);
\draw[ultra thick] (e3) to (f);
\draw[ultra  thick] (e4) to (f);

\end{tikzpicture}
\caption{Cell decomposition and $B$-orbits of the complete quadrics for $n=3$.}
\label{fig:Cells}
\end{figure}
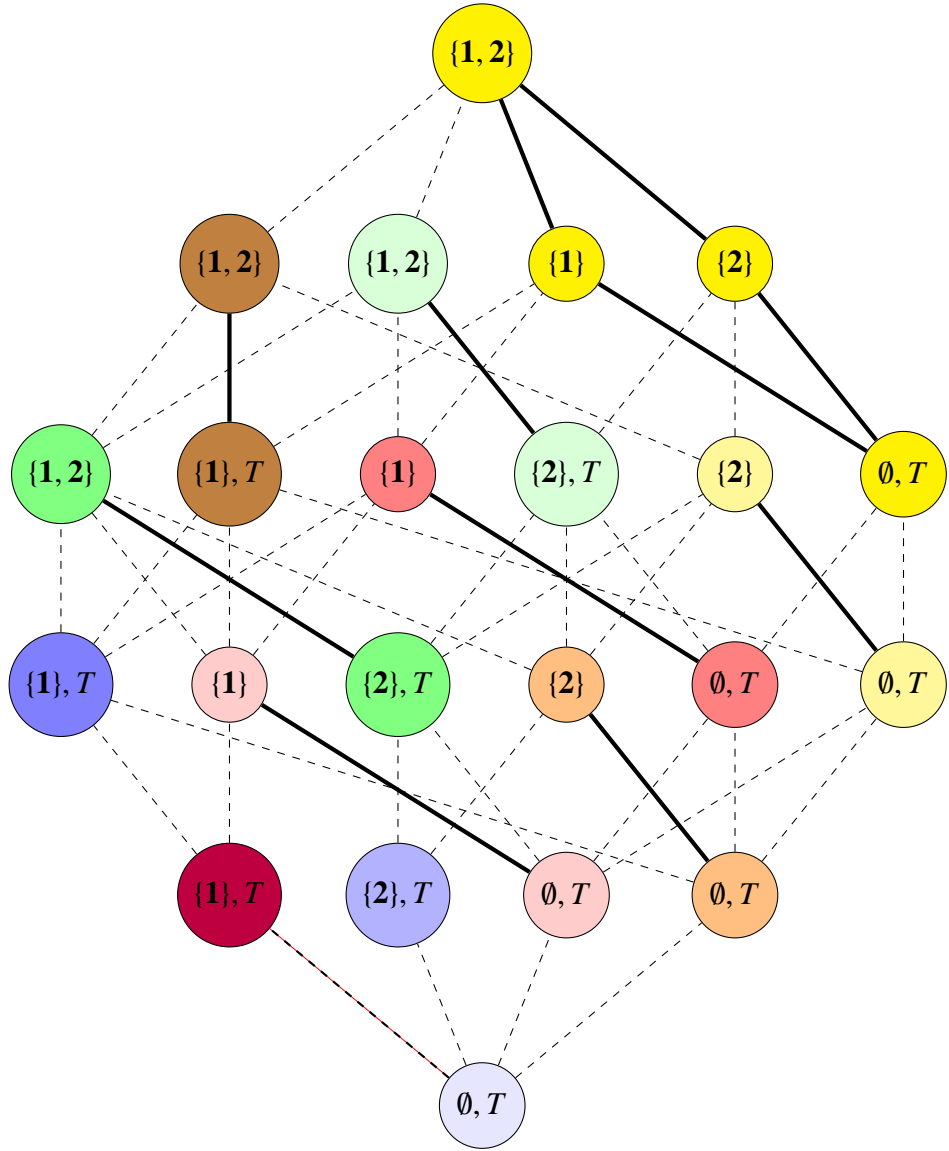

\begin{table}[htp]
\begin{center}
\begin{tabular}{c|c}\label{Table}
$J\subseteq [4]$ & $B_{5,J}(y)$ \\
\hline
$\emptyset$ & $q^4+ 26q^3+66q^2+26q+1$ \\
$\{1\}$ & $q^3+22q^2+33q+4$ \\
$\{2\}$ & $2q^3+29q^2+26q+3$ \\
$\{3\}$ & $3q^3+26q^2+29q+2$ \\
$\{4\}$ & $4q^3+33q^2+22q+1$ \\
$\{1,2\}$ & $3q^2+14q+3$ \\
$\{1,3\}$ & $3q^2+19q+8$ \\
$\{1,4\}$ & $4q^2+22q+4$ \\
$\{2,3\}$ & $7q^2+11q+2$ \\
$\{2,4\}$ & $8q^2+19q+3$ \\
$\{3,4\}$ & $q^2+13q+1$ \\

$\{1,2,3\}$ & $2q+3$ \\
$\{1,2,4\}$ & $7q+3$ \\
$\{1,3,4\}$ & $6q+4$ \\
$\{2,3,4\}$ & $4q+1$ \\

$\{1,2,3,4\}$ & $1$ \\
\end{tabular}
\caption{Generalized $B$-orbit enumerators.}\label{Table}
\end{center}
\end{table}

\end{document}